\def\qed{\hfill $\vcenter{\hrule height .3mm
\hbox {\vrule width .3mm height 2.1mm \kern 2mm \vrule width .3mm
height 2.1mm} \hrule height .3mm}$ \bigskip}
\def \RR {\mathbb R}
\def \EE {\mathbb E}
\def \PP {\mathbb P}
\def \eps {\varepsilon}
\def \Id {\mathrm{Id}}
\def\DD{\mathcal{D}}
\newcommand{\remove}[1]{}
\newtheorem{theorem}{Theorem}
\newtheorem{lemma}[theorem]{Lemma}
\newtheorem{fact}[theorem]{Fact}
\newtheorem{claim}[theorem]{Claim}
\newtheorem{proposition}[theorem]{Proposition}
\newtheorem{corollary}[theorem]{Corollary}
\theoremstyle{definition}
\theoremstyle{remark}
\long\def\symbolfootnotetext[#1]#2{\begingroup
\def\thefootnote{\fnsymbol{footnote}}\footnotetext[#1]{#2}\endgroup}
\begin{document}
\title{How many matrices can be spectrally balanced simultaneously?}
\author{Ronen Eldan \thanks{Weizmann Institute of Science; \texttt{ronen.eldan@weizmann.ac.il}}
\and
 Fedor Nazarov \thanks{Kent State University; \texttt{fedja@math.msu.edu}}
 \and
 Yuval Peres \thanks{Microsoft Research; \texttt{peres@microsoft.com}}}
\maketitle

\begin{abstract}
We prove that any $\ell$ positive definite $d \times d$ matrices, $M_1,\ldots,M_\ell$, of full rank, can be simultaneously spectrally balanced in the following sense: for any $k < d$ such that $\ell \leq \lfloor \frac{d-1}{k-1} \rfloor$, there exists a matrix $A$ satisfying $\frac{\lambda_1(A^T M_i A) }{ \mathrm{Tr}( A^T M_i A ) } < \frac{1}{k}$ for all $i$, where $\lambda_1(M)$ denotes the largest eigenvalue of a matrix $M$. This answers a question posed by Peres, Popov and Sousi (\cite{PPS}) and completes the picture described in that paper regarding sufficient conditions for transience of self-interacting random walks.
 Furthermore, in some cases we give quantitative bounds on the transience of such walks.
\end{abstract}

\section{Introduction}

The main objective of this note is to address the following question, raised in \cite{PPS}: given a set of $\ell$ symmetric bilinear positive-definite forms acting on a $d$-dimensional linear space, under which basis of this space are the corresponding matrices as spectrally balanced as possible, in the sense that the ratio between the trace and the operator norm is maximal?

 For a square matrix $M$, denote by $\lambda_i(M)$
the $i$-th eigenvalue of $M$ in decreasing order (counting with multiplicity) and by $\mathrm{Tr}(M)$   the sum of the eigenvalues of $M$. Recall that for any positive definite symmetric matrix $M$, the operator norm
$\Vert M \Vert_{OP}$ (with respect to its action on Euclidean space) coincides with $\lambda_1(M)$.  Our main theorem reads:

\begin{theorem} \label{mainthm}
Let $k,d,\ell$ be positive integers such that $d > k$ and $\ell \leq \lfloor \frac{d-1}{k-1} \rfloor$. Let $M_1,\ldots,M_{\ell}$ be $d \times d$ positive-definite symmetric matrices of full rank. Then there exists a matrix $A$ such that
\begin{equation} \label{eqmain}
\frac{\lambda_1(A^T M_i A) }{ \mathrm{Tr}( A^T M_i A ) } < \frac{1}{k}
\end{equation}
for all $1 \leq i \leq \ell$.
\end{theorem}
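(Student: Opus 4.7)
My approach is to prove the theorem by an extremal (minimax) argument: minimize the worst-case ratio over a compact parameter space, and show the minimum must be strictly below $1/k$ under the hypothesis, by deriving a contradiction through a dimension-counting argument at a minimizer.

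\emph{Reformulation.} The quantity $\lambda_1(A^T M_i A)/\mathrm{Tr}(A^T M_i A)$ depends only on the PSD matrix $P := A A^T$, since $A^T M_i A$ shares its nonzero spectrum with $M_i P$ and $\mathrm{Tr}(A^T M_i A) = \mathrm{Tr}(M_i P)$. The ratio is scale-invariant in $P$, so I normalize $\mathrm{Tr}(P) = 1$ and study the continuous function $F(P) := \max_i \lambda_1(M_i P)/\mathrm{Tr}(M_i P)$ on the compact convex set $\{P \succeq 0 : \mathrm{Tr}(P) = 1\}$. The bound $F(P) \geq 1/\mathrm{rank}(P)$ (since $M_i P$ has the same rank as $P$, as $M_i$ is invertible) shows that any $P$ with $F(P) < 1/k$ must satisfy $\mathrm{rank}(P) \geq k+1$.

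\emph{Minimizer and contradiction setup.} By compactness and continuity, $F$ attains its infimum at some $P^*$. I assume for contradiction that $t^* := F(P^*) \geq 1/k$, and introduce the active set $S := \{i : \lambda_1(M_i P^*) = t^* \mathrm{Tr}(M_i P^*)\}$. For each $i \in S$, let $E_i \subseteq \RR^d$ denote the top eigenspace of the symmetric matrix $P^{*1/2} M_i P^{*1/2}$, which shares its nonzero spectrum with $M_i P^*$.

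\emph{Dimension count.} The heart of the argument is a first-order variational analysis at $P^*$. The goal is to derive a dimension inequality of the form $|S|(k-1) \geq d$, contradicting $|S| \leq \ell \leq \lfloor (d-1)/(k-1) \rfloor$ (which gives $|S|(k-1) \leq d-1$). This should proceed through two ingredients: first, each eigenspace $E_i$ must be ``sufficiently rich'' --- reflecting that at the saturated ratio $\lambda_1 = \mathrm{Tr}/k$ the remaining eigenvalues of $M_i P^*$ are tightly constrained, so any perturbation of $P^*$ that strictly lowers the $i$-th ratio is forced to disturb some direction inside $E_i$; second, the family $\{E_i\}_{i \in S}$ must collectively ``block'' every feasible PSD, trace-preserving perturbation of $P^*$, for otherwise a perturbation in a ``missed'' direction would simultaneously decrease every active ratio and contradict minimality. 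Quantifying the interplay between these two ingredients should yield the desired bound on $|S|$.

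\emph{Main obstacle.} The principal difficulty will be carrying out this first-order analysis rigorously in the presence of eigenvalue multiplicities, where $\lambda_1(M_i P)$ is non-smooth in $P$. I anticipate needing an envelope-theorem / Danskin-style calculation to compute (sub)gradients of $F$, a KKT-type dual characterization of the active constraints, and a linear-algebraic lemma translating the geometric arrangement of the $\{E_i\}$ into the sharp dimension count $|S|(k-1) \geq d$. The delicate counting step --- linking the multiplicities $\dim E_i$ and the pairwise overlaps among the $E_i$ to this bound --- is what closes the argument against the hypothesis $\ell \leq \lfloor (d-1)/(k-1) \rfloor$.
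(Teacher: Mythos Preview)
Your high-level strategy --- minimize the worst ratio over a compact set, assume the minimum is $\ge 1/k$, and derive a contradiction by a perturbation/dimension-count argument at the minimizer --- is exactly the paper's strategy. The reformulation in terms of $P=AA^T$ is legitimate (the nonzero spectra and traces match), and in the \emph{interior} of your domain your sketch would go through: if $P^*$ is positive definite, then $P^{*1/2}M_iP^{*1/2}$ has rank $d>k$, so each top eigenspace $E_i$ has $\dim E_i\le k-1$, the union over active $i$ has dimension at most $\ell(k-1)\le d-1$, and perturbing by $\eps\,\eta\eta^T$ with $\eta$ orthogonal to all $M_i^{1/2}E_i$ fixes every active $\lambda_1$ while strictly increasing every trace. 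This is precisely the paper's ``interior'' step.

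The genuine gap is the boundary. Your compact set $\{P\succeq 0:\mathrm{Tr}\,P=1\}$ includes singular $P$, and you give no reason why the minimizer $P^*$ must be nonsingular. If $\mathrm{rank}(P^*)=r\le k$, two things break simultaneously: (i) the bound $\dim E_i\le k-1$ can fail (indeed $r=k$ with $\lambda_1/\mathrm{Tr}=1/k$ forces all nonzero eigenvalues equal and $\dim E_i=k$), so your dimension count $|S|(k-1)\ge d$ never gets off the ground; and (ii) the feasible directions at $P^*$ are constrained by the PSD cone, so even when a ``good'' direction exists linearly, you must check it stays feasible. Your proposal acknowledges non-smoothness of $\lambda_1$ but not this boundary obstruction, and the vague appeal to KKT/Danskin does not supply the missing argument.

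This is not a minor technicality: handling the boundary is the bulk of the paper's proof. The paper avoids singular minimizers by working instead on the set $\mathcal D_R=\{A:s_1(A)=1,\ s_j(A)/s_{j+1}(A)\le R\}$ of matrices with bounded singular-value ratios, for a parameter $R$ chosen \emph{after} the analysis, depending only on $M_1,\dots,M_\ell$. On $\partial\mathcal D_R$ the naive perturbation $A(\mathrm{Id}+\eps\,\eta\otimes\eta)$ may leave $\mathcal D_R$; the paper shows (Lemmas 7--12, Proposition 13, Lemma 14) that there is a fixed radius $c_0=c_0(M_1,\dots,M_\ell)$ such that \emph{any} $u$ within $c_0$ of the ideal direction $\tilde u$ still decreases the objective, and then picks $R=R(c_0)$ large enough that inside this ball one can always find a $u$ satisfying the boundary-compatibility condition $\sum_{s_k=s_j}u_k^2\le \tfrac{R^2}{2}\sum_{s_k=s_{j+1}}u_k^2$. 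This two-scale choice (first $c_0$, then $R$) is the key idea you are missing; without an analogue of it, your plan does not close.
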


The bound given by this theorem is sharp in the following sense: if $k, \ell, d$ satisfy $(k-1) \ell \geq d$ then there exist $d \times d$  symmetric positive definite invertible matrices $\{M_i\}_{i=1}^\ell$, such that  for every $d \times d$ matrix $A$, there is an  $ i \in [1, \ell]$ that satisfies
\begin{equation} \label{eqmainsharp}
\frac{\lambda_1(A^T M_i A) }{ \mathrm{Tr}( A^T M_i A ) } > \frac{1}{k}.
\end{equation}
This is shown in subsection \ref{sec:sharp} below. \\

To illustrate Theorem \ref{mainthm},  consider the case of two $3 \times 3$ positive definite invertible matrices $M_1,M_2$. We claim that there exists a matrix $A$  such that  $\mathrm{Tr}(A^T M_i A) > 2 \Vert M_A \Vert_{OP}$ for $i=1,2$. To see this, first remark that since $M_1$ is symmetric, we can apply a rotation
 to the $M_i$  (i.e., multiply each $M_i$ by the rotation from the right and by its transpose from the left) to transform $M_1$  to a diagonal matrix. Applying a suitable diagonal matrix and another rotation, we can transform $M_1$ to the identity $I$, and $M_2$  to a diagonal matrix $M_2'=\mathrm{diag}(a,b,c)$.  We may assume that $ a \ge b \ge c>0$. Applying the diagonal matrix $\mathrm{diag}(\sqrt{b/a}, 1, 1 )$ to $I$ and $M_2'$ yields two matrices $M_1'', M_2''$, such that each $M_i''$ has two equal eigenvalues, and one positive eigenvalue which is smaller than or equal to these two. This proves our claim. \\

In \cite{PPS}, it is demonstrated how the question addressed in  Theorem \ref{mainthm} arises from the topic of \emph{self-interacting random walks}: Given  centered measures $\mu_1,\ldots, \mu_\ell$, one can consider a generalized random walk, where the law of each step (given the history) is  one of these measures; in each step, we choose an index $1 \leq i \leq \ell$ using a certain \emph{adapted rule}, and then take a step distributed according to $\mu_i$.
More precisely, let $\mu_1,\ldots,\mu_\ell$ be centered probability measures in $\RR^d$, with finite third moments. Let $\{\mathcal{F}_t\}$ be a filtration and for each $j=1,\ldots,\ell$ let $\{\xi_t^j\}_{t=1}^\infty$ be an independent sequence of random vectors with law $\mu_j$, adapted to this filtration
(i.e., $\xi_t^j$ is $\mathcal{F}_t$--measurable for every $t \ge 1$). We say that $\{X_t\}$ is an  \emph{adaptive random walk using the measures $\mu_1,\ldots,\mu_k$} if there exists an  $\{\mathcal{F}_t\}$-adapted  process $\{I_t\}$,   such that for all $t \ge 0$
$$
X_{t+1} = X_t + \xi_{t+1}^{I_t} \,.
$$

The following question was originally raised by I. Benjamini: Is there a simple sufficient condition on the centered measures $\{\mu_i\}_{i=1}^\ell$, that implies every adaptive walk using these measures is transient? In \cite{PPS} such a condition is given  in terms of the spectrum of the corresponding covariance matrices. Let $M_i$ denote the covariance matrix of $\mu_i$ for each $i$. Then the following statement holds.

\begin{theorem} \cite[Theorem 1.3]{PPS}
Suppose that there exists a matrix $A$ such that for all $i=1,\ldots,\ell$, one has $\mathrm{Tr}(A^T M_i A) > 2 \lambda_1(A^T M_i A)$. Then every adaptive random walk using $\mu_1,\ldots,\mu_\ell$, is transient.
\end{theorem}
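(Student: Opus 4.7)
The plan is to reduce to the case $A = \Id$ via a linear change of variables and then construct a Lyapunov function exhibiting a negative drift at infinity. Setting $Y_t := A^T X_t$, the process $Y_t$ is itself an adaptive walk using the pushforward measures $(A^T)_\ast\mu_i$, whose covariance matrices are exactly $A^T M_i A$ and whose third moments remain finite. Since $|Y_t| \leq \|A^T\|_{\mathrm{OP}}\cdot|X_t|$, showing $|Y_t|\to\infty$ almost surely is enough to conclude $|X_t|\to\infty$. We may therefore relabel and assume that the covariances themselves satisfy $\mathrm{Tr}(M_i) > 2\lambda_1(M_i)$ for every $i$.

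I would then use the Lyapunov function $f(x) := (1+|x|^2)^{-\alpha/2}$, where $\alpha \in (0,1)$ is small enough that $(\alpha+2)\lambda_1(M_i) < \mathrm{Tr}(M_i)$ for every $i = 1,\ldots,\ell$; such $\alpha$ exists because we have finitely many strict inequalities. A direct differentiation gives, for $|x|$ large,
\[
\nabla^2 f(x) = \alpha(\alpha+2)|x|^{-\alpha-4}xx^T - \alpha|x|^{-\alpha-2}\Id + O(|x|^{-\alpha-4}),
\]
so that for any covariance $M$,
\[
\mathrm{Tr}\bigl(M\,\nabla^2 f(x)\bigr) = \alpha|x|^{-\alpha-2}\Bigl[(\alpha+2)\tfrac{x^T M x}{|x|^2} - \mathrm{Tr}(M)\Bigr] + O(|x|^{-\alpha-4}).
\]
Because $x^T M x/|x|^2 \leq \lambda_1(M)$, the choice of $\alpha$ makes the bracket bounded above by $-c$ for some $c > 0$ uniform in $M \in \{M_i\}$. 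Taylor expanding $f(X_t + \xi)$ to second order about $X_t$, the first-order term vanishes in conditional expectation because each $\mu_i$ is centered, and the second-order contribution equals $\tfrac{1}{2}\mathrm{Tr}(M_{I_t}\nabla^2 f(X_t))$, which is at most $-c'|X_t|^{-\alpha-2}$ for some $c' > 0$ whenever $|X_t|$ is large.

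The main obstacle is controlling the Taylor remainder, since a step $\xi$ may be large compared to $|X_t|$. I would split on $\{|\xi|\leq|X_t|/2\}$ and its complement. On the former, $|\nabla^3 f|$ is of order $|X_t|^{-\alpha-3}$ along the entire segment from $X_t$ to $X_t+\xi$, so finiteness of $\mathbb{E}|\xi|^3$ yields a remainder of order $|X_t|^{-\alpha-3}$. On the latter, $f$ is bounded and Markov's inequality applied to $\mathbb{E}|\xi|^3$ gives a contribution of order $|X_t|^{-3}$. For $\alpha < 1$, both errors are $o(|X_t|^{-\alpha-2})$, and consequently
\[
\mathbb{E}[f(X_{t+1})-f(X_t)\mid \mathcal{F}_t] \;\leq\; -\tfrac{c'}{2}|X_t|^{-\alpha-2}
\]
for all $|X_t|$ beyond some threshold $R$. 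A standard Foster--Lyapunov argument then finishes the proof: if $\tau_R$ denotes the first return of the walk to the ball $\{|x|\leq R\}$, the stopped process $f(X_{t\wedge\tau_R})$ is a nonnegative supermartingale, and the supermartingale maximal inequality yields $\mathbb{P}(\tau_R<\infty\mid X_0=x_0) \leq f(x_0)/\inf_{|x|\leq R}f(x)$, which tends to $0$ as $|x_0|\to\infty$. Iterating this escape estimate (applied at successive times when $|X_t|$ is large) forces $|X_t|\to\infty$ almost surely, i.e.\ transience.
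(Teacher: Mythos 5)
Your argument follows the same Lyapunov-function route as the proof of this result in \cite{PPS} (and as its quantitative refinement in Section~3 of this paper): pass to $Y_t=A^TX_t$ so that the step covariances become $A^TM_iA$, choose $\alpha>0$ small enough that $(\alpha+2)\lambda_1(A^TM_iA)<\mathrm{Tr}(A^TM_iA)$ for every $i$, and show that $|y|^{-\alpha}$ has nonpositive drift along the walk outside a large ball. The Hessian computation, the constraint $\alpha<1$ needed to make the truncated third-moment remainder $o(|x|^{-\alpha-2})$, and the optional-stopping bound on the return probability are all correct; this is exactly the content of Lemma~\ref{lemsubmartingale} with $k=2$.

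There is, however, one genuine missing ingredient in your concluding step. To ``iterate the escape estimate at successive times when $|X_t|$ is large,'' you must first know that such times exist: a priori the walk could, with positive probability, stay in a fixed ball forever, in which case it is not transient and your escape estimate is never triggered. One therefore needs a separate lemma asserting that the walk almost surely exits every bounded set; this is \cite[Lemma 2.1]{PPS}, reproduced here as Lemma~\ref{lemball}, and it is proved by a second-moment submartingale argument using only that the increments of $Y_t$ have strictly positive trace of covariance (which follows from $\mathrm{Tr}(A^TM_iA)>2\lambda_1(A^TM_iA)\ge 0$). With that in hand, the standard alternation --- almost-sure exit to radius $\rho$, followed by a return to radius $R$ with probability at most $q<1$ uniformly over starting points of norm at least $\rho$ --- shows each ball is visited only finitely often, hence $|Y_t|\to\infty$ and so $|X_t|\to\infty$. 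The rest of your write-up is sound.
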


The condition given by this theorem is not entirely satisfactory, as it is not clear when such a matrix $A$ exists. This loose end is tied by Theorem \ref{mainthm} above, yielding the immediate corollary
\begin{corollary}
Suppose that $d > \ell$ and the covariance matrices $M_1,\ldots,M_\ell$ of the centered measures $\mu_1,\ldots,\mu_\ell$ are invertible. Then any adaptive random walk using $\mu_1,\ldots,\mu_\ell$ is transient.
\end{corollary}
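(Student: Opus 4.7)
The plan is a direct chain of the two results already in hand: Theorem~\ref{mainthm} and Theorem~1.3 of \cite{PPS}. I instantiate Theorem~\ref{mainthm} at $k=2$. The parameter condition $\ell \leq \lfloor \frac{d-1}{k-1}\rfloor$ then reads $\ell \leq d-1$, which is equivalent to the hypothesis $d > \ell$ of the corollary; the auxiliary requirement $d > k$ becomes $d \geq 3$, which is automatic as soon as $\ell \geq 2$ (the case $\ell = 1$ reduces to an ordinary random walk, for which transience in $d \geq 3$ is classical).

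Theorem~\ref{mainthm} thus produces a matrix $A$ with
\[
\frac{\lambda_1(A^T M_i A)}{\mathrm{Tr}(A^T M_i A)} < \frac{1}{2}
\qquad \text{for every } i \in \{1,\dots,\ell\},
\]
equivalently $\mathrm{Tr}(A^T M_i A) > 2\,\lambda_1(A^T M_i A)$ for all $i$. This is precisely the hypothesis of Theorem~1.3 of \cite{PPS} for this particular $A$. Invoking that theorem, every adaptive random walk driven by $\mu_1,\dots,\mu_\ell$ is transient, which is the desired conclusion.

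The substantive content therefore lives entirely in Theorem~\ref{mainthm}; the corollary itself is a one-step translation once the existence of a spectrally-balancing $A$ is secured, and no genuine obstacle arises in this step. It is worth noting, however, that the threshold $d > \ell$ is the correct one from this perspective: on the complementary side $(k-1)\ell \geq d$ with $k=2$, i.e.\ $\ell \geq d$, the lower-bound construction alluded to after Theorem~\ref{mainthm} produces covariances for which no balancing $A$ exists, so the hypothesis on $d$ and $\ell$ cannot be relaxed via this route.
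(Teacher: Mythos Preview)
Your argument is exactly the one the paper intends: the corollary is presented there as the immediate consequence of pairing Theorem~\ref{mainthm} at $k=2$ with \cite[Theorem~1.3]{PPS}, and you have written out that pairing correctly, including the check that $d>\ell$ matches $\ell\le\lfloor(d-1)/(k-1)\rfloor$. The only residual edge case is $\ell=1$, $d=2$, which your aside on $\ell=1$ does not cover; but a planar centered walk with finite variance is recurrent by Chung--Fuchs, so this is a defect in the corollary's stated range rather than in your proof.
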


Theorem \ref{mainthm} actually yields a quantitative estimate on the probability of returning to a neighborhood of the origin:  
\begin{theorem} \label{cor1}
Let $k,d,\ell$ be positive integers such that $d > k>1$ and $\ell \leq \lfloor \frac{d-1}{k-1} \rfloor$. Let $\mu_1,\ldots,\mu_\ell$ be centered probability measures in $\RR^d$, with finite second moments and non-singular covariance matrices. Then there exists  $\eps > 0$, so that for every $R>0$ there is a constant $C_R>0$ with the following property:  Every adaptive random walk $\{X_t\}$   using $\mu_1,\ldots,\mu_\ell$ satisfies
$$
\PP \bigl (\exists t>T \mbox{ such that } X_t \in B(0,R) \bigr ) \leq C_R T^{-\frac{k-2}{2}-\eps}, ~~ \forall T>0.
$$
\end{theorem}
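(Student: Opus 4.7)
The plan is to combine Theorem~\ref{mainthm} with a Lyapunov-function argument modeled on the radial part of a transient Bessel process. First, apply Theorem~\ref{mainthm} to the covariance matrices $M_1,\ldots,M_\ell$ to obtain an invertible $A$ with $\lambda_1(A^T M_i A)/\mathrm{Tr}(A^T M_i A) < 1/k$ for every $i$; by strict inequality and finiteness there is a uniform $\delta = \delta(\mu_1,\ldots,\mu_\ell) > 0$ with $\lambda_1(\Sigma_i)/\mathrm{Tr}(\Sigma_i) \le 1/(k+\delta)$ for every $i$, where $\Sigma_i := A^T M_i A$. The linearly transformed process $Y_t := A^T X_t$ is again an adaptive walk (with pushed-forward step measures of covariance $\Sigma_{I_t}$), and $\{X_t \in B(0,R)\} \subseteq \{\|Y_t\| \le R'\}$ with $R' := R \,\|A^T\|_{OP}$. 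It thus suffices to bound $\PP(\exists t > T : \|Y_t\| \le R')$.

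Set $\alpha := (k-2)/2 + \eps$ with $\eps \in (0,\delta/4)$ and consider the Lyapunov function $V(y) := (\|y\|^2 + 1)^{-\alpha}$. A second-order Taylor expansion, using $\EE[\xi]=0$ and $\EE[\xi\xi^T] = \Sigma_i$, gives
\[
\EE\bigl[V(y+\xi_t^i) - V(y)\bigr] \;=\; \frac{\alpha\,\mathrm{Tr}(\Sigma_i)}{(\|y\|^2+1)^{\alpha+1}} \left[\frac{2(\alpha+1)\,y^T\Sigma_i y}{(\|y\|^2+1)\,\mathrm{Tr}(\Sigma_i)} - 1\right] \;+\; \mathrm{error}.
\]
Using $y^T \Sigma_i y/(\|y\|^2\mathrm{Tr}(\Sigma_i)) \le 1/(k+\delta)$ and $\alpha < (k-2+\delta)/2$, the bracket is bounded above by the strictly negative constant $2(\alpha+1)/(k+\delta) - 1 < 0$, so once $\|y\|$ exceeds some threshold $R_0$ the leading term dominates the error and $V(Y_t)$ is a supermartingale on $\{\|Y_t\| > R_0\}$. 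Controlling the Taylor remainder without assuming a third moment is delicate; the remedy is to truncate each step at a level $L = L(\|y\|)$ growing with $\|y\|$, so that the bounded portion produces a third-order error dominated by the leading term, while the tail is absorbed using $\EE[\|\xi^i\|^2\mathbf{1}_{\|\xi^i\|>L}] \to 0$.

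Now set $R_1 := \max(R', R_0)$ and $\tau := \inf\{t \ge T : \|Y_t\| \le R_1\}$. Optional stopping applied to the supermartingale $V(Y_{t\wedge \tau})$ on $[T,T']$ and sending $T' \to \infty$ yields
\[
\PP(\tau < \infty) \cdot \min_{\|y\|\le R_1} V(y) \;\le\; \EE[V(Y_T)],
\]
so it remains to show $\EE[V(Y_T)] \le C\,T^{-\alpha}$. Since $V \le 1$, this is a lower-tail estimate on $\|Y_T\|^2$: the drift identity $\EE[\|Y_T\|^2] = \sum_{t<T}\EE[\mathrm{Tr}(\Sigma_{I_t})] \ge c_0 T$ (with $c_0 = \min_i \lambda_d(\Sigma_i) > 0$) is the starting point, and a martingale analysis of $\|Y_t\|^2 - \sum_{s<t}\mathrm{Tr}(\Sigma_{I_s})$ (again after truncation of the $\|\xi^i\|^2$) gives a polynomial-in-$T$ bound on $\PP(\|Y_T\|^2 < c_0 T/2)$, which combined with $V \le 1$ yields the desired $\EE[V(Y_T)] \le C\,T^{-\alpha}$. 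The main obstacle is making both the Taylor-remainder control and the lower-tail estimate sufficiently quantitative under only the stated second-moment assumption; once these technical points are handled, the rest is a standard application of the Lyapunov framework.
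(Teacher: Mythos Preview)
Your overall architecture --- apply Theorem~\ref{mainthm} to pass to $Y_t = A^T X_t$, then run a Lyapunov supermartingale of the form $\|y\|^{-2\alpha}$ --- is exactly what the paper does. The Taylor computation showing the bracket is negative when $\alpha<(k-2+\delta)/2$ is also correct in spirit and matches the paper's Lemma~\ref{lemsubmartingale}.

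The genuine gap is in the last step. You reduce everything to proving $\EE[V(Y_T)]\le C\,T^{-\alpha}$, and for this you assert that a martingale analysis of $\|Y_t\|^2-\sum_{s<t}\mathrm{Tr}(\Sigma_{I_s})$ yields a \emph{polynomial-in-$T$} bound on $\PP(\|Y_T\|^2<c_0 T/2)$. That claim is false already for an ordinary i.i.d.\ walk: by the CLT, $\|Y_T\|^2/T$ converges in law to a nondegenerate random variable (a weighted $\chi^2$), so $\PP(\|Y_T\|^2<c_0 T/2)$ tends to a \emph{positive constant}, not to zero. Hence the splitting $\EE[V(Y_T)]\le (c_0 T/2)^{-\alpha}+\PP(\|Y_T\|^2<c_0 T/2)$ cannot deliver $O(T^{-\alpha})$; the martingale $\|Y_t\|^2-\sum\mathrm{Tr}(\Sigma_{I_s})$ has fluctuations of the same order $T$ as its drift, so no concentration argument will rescue this.

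The paper avoids the problem by \emph{not} starting the supermartingale at the deterministic time $T$. Instead it introduces the first exit time $\tau_1=\inf\{t:\|Y_t\|\ge r\}$ with $r\approx T^{(1-\eps/2)/2}$, proves via an elementary submartingale/iteration argument (Lemma~\ref{lemball}, using only second moments and truncation) that $\PP(\tau_1>T)\le C\exp(-cT^{\eps/4})$, and then applies optional stopping to the supermartingale $\|Y_t\|^{-\alpha}$ from the random time $\tau_1$. At that moment one has deterministically $\|Y_{\tau_1}\|\ge r$, so $\PP(\text{return to }B(0,R_2)\text{ after }\tau_1)\le (R_2/r)^{\alpha}\le C\,T^{-(k-2)/2-\delta}$, and a union bound finishes. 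The point is that exiting a ball of radius $r$ by time $O(r^2)$ is something one \emph{can} prove uniformly over adaptive rules, whereas controlling $\|Y_T\|$ at a fixed time is not. If you replace your final paragraph by this exit-time argument, the rest of your outline goes through.
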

For the proof, see Section 3.

\section{Proof of Theorem \ref{mainthm}}

Let $(\mathbb{M}_{d \times d}, \mathcal{T}) $ be the space of $d \times d$ real matrices equipped with the standard topology $\mathcal{T}$ of $\RR^{d^2}$. For a matrix $A \in \mathbb{M}_{d \times d}$, define $s_i(A) = \sqrt{\lambda_i(A^T A)}$, the $i$-th singular value of $A$. Next, define
$$
\DD_R = \left \{ A \in \mathbb{M}_{d \times d}; ~s_1(A) = 1 \mbox { and } ~ 1 \leq \frac{s_j(A)}{s_{j+1}(A)} \leq R, ~ \forall  j \in [1, d-1] \right  \}.
$$
We begin by recalling the following elementary fact (see e.g., \cite{Kato})
\begin{fact} \label{fact:cont}
The functions $s_i(\cdot)$ are continuous with respect to the topology $\mathcal{T}$.
\end{fact}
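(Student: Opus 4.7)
Since the statement is recalled from a standard reference (Kato), the "proof" here is really a sketch of the classical argument. The plan is to reduce continuity of the singular values to continuity of eigenvalues of symmetric matrices via the identity $s_i(A) = \sqrt{\lambda_i(A^T A)}$. The map $A \mapsto A^T A$ from $\bM_{d \times d}$ to the symmetric matrices is polynomial in the entries of $A$, hence continuous with respect to $\TTT$, and $\sqrt{\cdot}$ is continuous on $[0, \infty)$. So it suffices to verify that on the space of symmetric $d \times d$ matrices, the $i$-th eigenvalue in decreasing order is a continuous function of the matrix entries.

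For that step I would invoke the Courant--Fischer min-max principle:
$$
\lambda_i(M) \;=\; \min_{\substack{V \subset \RR^d \\ \dim V = d - i + 1}} \; \max_{\substack{v \in V \\ \|v\| = 1}} \; v^T M v.
$$
For each fixed unit vector $v$, the map $M \mapsto v^T M v$ is linear, and it is $1$-Lipschitz in the operator norm uniformly in $v$. Taking the max over the compact sphere inside a subspace and then the min over the compact Grassmannian of subspaces of the appropriate dimension preserves this Lipschitz control. One in fact obtains Weyl's bound $|\lambda_i(M) - \lambda_i(M')| \leq \|M - M'\|_{OP}$, which gives continuity in a strong quantitative form.

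An alternative route would be to use that the characteristic polynomial $\det(\lambda I - M)$ has coefficients that are polynomial in the entries of $M$, combined with the classical theorem that the roots of a monic polynomial depend continuously on its coefficients. The \emph{only} subtlety is pairing the roots up with the correct decreasing ordering near configurations where eigenvalues collide; the min-max formula bypasses this difficulty automatically because the $i$-th ordered eigenvalue is expressed intrinsically, without any choice of labelling. For this reason I would present the argument through Courant--Fischer, which is essentially the main obstacle to be navigated and is also the shortest path to the Lipschitz estimate.
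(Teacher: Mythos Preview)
Your argument is correct; the paper itself does not prove this fact at all but merely cites Kato, so you are actually supplying more than the paper does. The Courant--Fischer route you chose is the standard and cleanest way to obtain the Weyl Lipschitz bound $|\lambda_i(M)-\lambda_i(M')|\le\|M-M'\|_{OP}$, and your reduction via $A\mapsto A^TA$ and $\sqrt{\cdot}$ is exactly right.
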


As a corollary, we have:
\begin{corollary}
The set $\DD_R$ is compact with respect to the topology $\mathcal{T}$.
\end{corollary}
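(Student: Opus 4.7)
The plan is to show $\DD_R$ is both bounded and closed in $\RR^{d^2}$, so Heine--Borel gives compactness.

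For boundedness, note that since $s_1(A)=1$ is the operator norm of $A$, every column of $A$ has Euclidean norm at most $1$, hence the Frobenius norm of $A$ is at most $\sqrt{d}$. So $\DD_R$ sits inside a closed ball of radius $\sqrt{d}$ in $(\mathbb{M}_{d\times d},\TTT)$.

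For closedness, I would first observe a uniform lower bound on singular values: iterating the inequality $s_{j+1}(A)\ge s_j(A)/R$ starting from $s_1(A)=1$ gives $s_j(A)\ge R^{-(j-1)}$ for every $A\in\DD_R$ and every $j\in[1,d]$. In particular, for matrices in $\DD_R$, the ratios $s_j(A)/s_{j+1}(A)$ are well-defined continuous functions of $A$, because the denominator is bounded away from $0$. Now suppose $A_n\in\DD_R$ and $A_n\to A$ in $\TTT$. By Fact \ref{fact:cont}, $s_i(A_n)\to s_i(A)$ for all $i$. Hence $s_1(A)=\lim s_1(A_n)=1$, and $s_{j+1}(A)\ge R^{-j}>0$, so we may form the ratio $s_j(A)/s_{j+1}(A)$, which equals $\lim s_j(A_n)/s_{j+1}(A_n)\in[1,R]$. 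Therefore $A\in\DD_R$.

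The only mild subtlety is making sure the ratio bounds pass to the limit, which is why the uniform lower bound $s_j(A)\ge R^{-(j-1)}$ matters: without it, a limiting matrix could be singular and the ratio ill-defined. Once this is in hand the argument is routine, and compactness follows immediately.
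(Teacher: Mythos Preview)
Your proof is correct and follows essentially the same approach as the paper: both arguments use Heine--Borel, with boundedness coming from $s_1(A)=1$ and closedness from the continuity of singular values together with the key observation that the $s_j$ are uniformly bounded below on $\DD_R$ (so the ratios are well-defined at any limit point). The only cosmetic difference is that the paper introduces an auxiliary compact set $\DD_R' = \{A : 1 \geq s_1(A) \geq \cdots \geq s_d(A) \geq R^{-d}\}$ and then shows $\DD_R$ is closed inside it, whereas you argue closedness directly via sequences.
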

\begin{proof}
Consider the set $\DD_R' = \left \{ A \in \mathbb{M}_{d \times d}; 1 \geq s_1(A) \geq \ldots \geq s_d(A) \geq R^{-d} \right \}$. It is clear that $\DD_R'$ is bounded. Moreover, the two conditions $s_1(A) \leq 1$ and $s_d(A) \geq R^{-d}$ are closed conditions and therefore $\DD_R'$ is compact. Now, by definition for every $1 \leq j \leq d-1$,  we have that $s_{j+1}(A)$ is strictly bounded away from zero and therefore the expression $\frac{s_j(A)}{s_{j+1}(A)}$ is continuous in $\DD_R'$. Consequently, the conditions which appear in the definition of $\DD_R$ are closed conditions and the corollary follows.
\end{proof}

Consider the functions $f_i: \DD_R \to \RR$,
$$
f_i(A) = \frac{\lambda_1(A^T M_i A)}{\mathrm{Tr}(A^T M_i A)}.
$$
and
$$
f(A) = \max_{1 \leq i \leq \ell} f_i(A).
$$

\begin{claim}
The functions $\{f_i\}_{1 \leq i \leq \ell}$ and $f$ are continuous in the domain $\DD_R$.
\end{claim}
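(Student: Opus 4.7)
The plan is to write each $f_i$ as a ratio of two continuous functions of $A$, and to check that on $\DD_R$ the denominator stays strictly positive; continuity of $f$ will then follow because the maximum of finitely many continuous functions is continuous.

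First I would observe that the map $A \mapsto A^T M_i A$ is polynomial (hence continuous) in the entries of $A$. For any positive semidefinite symmetric matrix $B$ one has $\lambda_1(B)=s_1(B)$, so applying Fact \ref{fact:cont} to the composition $A \mapsto A^T M_i A \mapsto s_1(A^T M_i A)$ shows that the numerator $A \mapsto \lambda_1(A^T M_i A)$ is continuous on $\bM_{d \times d}$. The denominator $A \mapsto \mathrm{Tr}(A^T M_i A)$ is likewise a polynomial in the entries of $A$ and therefore continuous.

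The key step is to ensure that the denominator does not vanish on $\DD_R$. Let $\lambda_d(M_i)>0$ be the smallest eigenvalue of $M_i$; since $M_i$ is positive definite,
\[
\mathrm{Tr}(A^T M_i A) \;\ge\; \lambda_d(M_i)\,\mathrm{Tr}(A^T A) \;=\; \lambda_d(M_i) \sum_{j=1}^d s_j(A)^2 \;\ge\; \lambda_d(M_i)\, s_1(A)^2 \;=\; \lambda_d(M_i),
\]
where in the last equality I used that $s_1(A)=1$ for every $A\in\DD_R$. Thus on $\DD_R$ the denominator is bounded below by the positive constant $\lambda_d(M_i)$, and the quotient defining $f_i$ is a ratio of continuous functions with nonvanishing denominator, hence continuous.

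Finally, since $f = \max_{1 \le i \le \ell} f_i$ is the pointwise maximum of finitely many continuous functions on $\DD_R$, it is itself continuous. The only potential subtlety is the positivity of the denominator, and this is handled cleanly by combining the normalization $s_1(A)=1$ built into $\DD_R$ with the positive definiteness of each $M_i$; no delicate argument is needed beyond this.
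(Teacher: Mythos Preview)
Your proof is correct and follows essentially the same approach as the paper: both arguments show that the numerator and denominator are continuous and that the denominator is bounded away from zero on $\DD_R$ using the positive definiteness of $M_i$. The only minor difference is that you bound $\mathrm{Tr}(A^T M_i A)$ from below via $s_1(A)=1$, whereas the paper uses $s_d(A)\ge R^{-d}$; either works.
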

\begin{proof}
  Fact \ref{fact:cont} and the continuity of matrix multiplication imply that the functions $A \to \lambda_1(A^T M_i A)$ and $A \to \mathrm{Tr}(A^T M_i A)$ are continuous.

It remains to prove that $\mathrm{Tr}(A^T M_i A)$ is bounded away from zero on $\DD_R$. Fix $1 \leq i \leq \ell$. Recall that $M_i$ is of full rank. Therefore there exists a constant $c>0$ such that $\langle v, M_i v \rangle \geq c |v|^2$ for all $v \in \RR^d$. By the definition of $\DD_R$ we have $s_d(A) \geq R^{-d}$ for all $A \in \DD_R$. Therefore,  
$$
\langle A^T M_i A v, v \rangle = \langle M_i A v, A v \rangle \geq c |A v|^2 \geq c R^{-2d} |v|^2
$$
and it follows that $\mathrm{Tr}(A^T M_i A) \geq c R^{-2d}$. 
\end{proof}

By the compactness of $\DD_R$ and the continuity of $f$, we deduce that the minimum $\min_{A \in \DD} f(A)$ is attained by some matrix $A \in \DD_R$. \\

Next, we claim that without loss of generality, we may assume that the matrix $A$ is symmetric positive definite. Indeed, using the polar decomposition theorem (\cite[Chapter 3]{HJ}) we can write $A = B U$ where $B$ is positive semi-definite and $U$ is an orthogonal matrix. Clearly $s_j(B)=s_j(A)$ for all $j$. Moreover,
$$
\lambda_j \bigl (A^T M_i A \bigr ) = \lambda_j \bigl (U^T B M_i B U \bigr ) = \lambda_j(B M_i B), ~~ \forall i \in [1,\ell], ~ \forall j \in [1,d]\, .
$$
It follows that $B \in \DD_R$ and that $f(B) = f(A)$. Therefore, by taking $B$ in place of $A$, we see that our assumption is justified. \\

The proof will rely on a perturbative argument: we will assume that the $d \times d$ matrix $A \in  \DD_R$ satisfies
\begin{equation} \label{Aisbad}
f(A) \geq \frac{1}{k} \,,
\end{equation}
and deduce that there exists a small perturbation of $A$ which lies in $\DD_R$ and decreases $f$. This will imply that the $A \in  \DD_R$ that minimizes $f$
must satisfy $f(A)<1/k$, as desired.

Our first goal is to reduce the proof to the case where $A \in \partial \DD_R$ (where $\partial$ denotes the boundary with respect to the topology $\mathcal{T}$). To this end, assume that $A$ is in the interior of $\DD_R$. Define
$$
I = \{i \in [1,\ell] ; ~ f_i(A) = f(A) \}
$$
and denote $\ell' = |I|$ so that $\ell' \leq \ell$. For each $i \in I$ and $1 \leq j \leq k-1$, let $q_{i,j}$ be a unit eigenvector corresponding to $\lambda_j(A M_i A)$ chosen so that $q_{i,j_1} \perp q_{i, j_2}$ for $j_1 \neq j_2$. Let $v$ be a unit vector satisfying
$$
v \perp q_{i,j}, ~~ \forall i \in I, ~  \forall j \in [1, k - 1].
$$
Such a vector exists thanks to the assumption $\ell \leq \frac{d-1}{k-1}$. Define
\begin{equation} \label{basicconstr}
A(\eps) = A (\mathrm{Id} + \eps v \otimes v).
\end{equation}
By definition, we have for all $i \in I$ and for all $  j \in [1,k-1]$ that
$$
A(\eps)^T M_i A(\eps) q_{i,j}
$$
$$
= A M_i A q_{i,j} + \eps A(\eps)^T M_i A  (v \otimes v ) q_{i,j} + \eps (v \otimes v) A M_i A q_{i,j}
$$
$$
= A M_i A q_{i,j} = \lambda_j(A M_i A) q_{i,j}.
$$
which means that $\lambda_1(A M_i A), \ldots, \lambda_{k-1}(A M_i A)$ are eigenvalues of the matrix $A(\eps)^T M_i A(\eps)$ for all $\eps \in \RR$ (however, those are not necessarily the largest $k-1$ eigenvalues of this matrix). Observe that since $k < d$, the assumption \eqref{Aisbad} implies that $\lambda_k(A M_i A) < \lambda_1(A M_i A)$ for all $i \in I$. By continuity, this means that there exists some $\eps_0 > 0$ such that the above are in fact the $k-1$ \emph{largest} eigenvalues of the matrix $A(\eps)^T M_i A(\eps)$ as long as $0 < \eps < \eps_0$.

In other words, there exists some $\eps_0 > 0$ such that for all $i \in I$, the function $\lambda_1(A(\eps)^T M_i A(\eps))$ is constant in the interval $[0, \eps_0]$. On the other hand, we have for all $u \perp v$ that
$$
\langle A(\eps)^T M_i A(\eps)u, u \rangle = \langle A M_i A (\mathrm{Id} + \eps v \otimes v) u, (\mathrm{Id} + \eps v \otimes v) u \rangle = \langle A M_i A u, u \rangle
$$
and
$$
\langle A(\eps)^T M_i A(\eps)v, v \rangle = (1 + \eps)^2 \langle A M_i A v, v \rangle
$$
which gives
$$
\mathrm{Tr}(A(\eps)^T M_i A(\eps)) = \mathrm{Tr}(A M_i A) + (2 \eps + \eps^2) \langle A M_i A v, v \rangle.
$$
Since the matrices $A M_i A$ are non-degenerate, it follows that the expression $\mathrm{Tr}(A(\eps)^T M_i A(\eps) )$ is strictly increasing with respect to $\eps$ on the interval $[0, \eps_0]$. We conclude that by choosing $\eps$ small enough (which ensures also that $A(\eps) \in \DD_R$), one can get $f(A(\eps)) < f(A)$ thus reaching a contradiction. \\

At this point we have reduced the proof to the case that $A \in \partial \DD_R$. The argument for this case is more delicate, as the direction of the perturbation must be chosen carefully to ensure that the perturbed matrix remains in $\DD_R$. Before we explain the idea which will allow us to do so, we will need a few more definitions.

For $\eta \in \RR^d$ and $\eps > 0$, we define
$$
A_\eta(\eps) = A (\mathrm{Id} + \eps \eta \otimes \eta).
$$
Let $D \subset \RR^d$ be defined as
$$
D = \left \{\eta \in \RR^d; ~ \exists \eps_0 > 0 \mbox{ such that } \frac{A_\eta(\eps)}{s_1(A_\eta(\eps))} \in \DD_R, ~ \forall \eps \in (0,\eps_0) \right \}.
$$
Finally, let $E_j$ be the subspace corresponding to the eigenvalue $\lambda_j(A)$.

The central tactic that will help us ensure that the perturbation of $A$ stays inside $\DD_R$ will be to consider vectors $\eta$ which satisfy the following condition
\begin{equation} \label{condeta}
| P_{E_j} \eta |^2 < \frac{1}{2} |P_{E_{j+1}} \eta |^2, ~~ \forall j \in \mathcal{J},
\end{equation}
where
$$
\mathcal{J} = \{j ; ~ \lambda_j(A) = R \lambda_{j+1} (A)  \}.
$$
The significance of this condition will be clarified by the following lemma:
\begin{lemma} \label{lemcondeta}
Whenever \eqref{condeta} holds, one has $\eta \in D$.
\end{lemma}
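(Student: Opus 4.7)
The plan is to apply first-order eigenvalue perturbation theory to the symmetric matrix $A_\eta(\eps)^T A_\eta(\eps)$ and verify that each consecutive singular-value ratio $s_j(A_\eta(\eps))/s_{j+1}(A_\eta(\eps))$ stays in $[1,R]$ for all sufficiently small $\eps > 0$. The lower bound is automatic from the decreasing ordering convention, and the equality $s_1 = 1$ is enforced by the explicit normalization in the definition of $D$, so only the upper bound requires real work. Since $A$ is symmetric positive definite by the earlier reduction, its singular values coincide with its eigenvalues, and one expands
$$
A_\eta(\eps)^T A_\eta(\eps) \;=\; A^2 + \eps \bigl( A^2 \eta \eta^T + \eta \eta^T A^2 \bigr) + \eps^2 \eta \eta^T A^2 \eta \eta^T.
$$

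Let $\mu_1 > \cdots > \mu_r > 0$ be the distinct eigenvalues of $A$ with eigenspaces $V_1,\ldots,V_r$. Restricted to each $V_s$ the first-order term is the rank-one self-adjoint operator $2\mu_s^2 (P_{V_s}\eta)(P_{V_s}\eta)^T$. Standard degenerate first-order perturbation theory then gives: among the $\dim V_s$ singular values of $A_\eta(\eps)$ bifurcating from $\mu_s$, exactly one is shifted upward to $\mu_s\bigl(1 + \eps |P_{V_s}\eta|^2\bigr) + O(\eps^2)$, while the remaining $\dim V_s - 1$ are all $\mu_s + O(\eps^2)$. After re-sorting in decreasing order, the bumped value occupies the top index of its block.

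I would then verify $s_j/s_{j+1} \le R$ case by case. If $j$ and $j+1$ lie inside a single block, the ratio is $1 + O(\eps)$ and lies in $[1,R]$. If $j$ and $j+1$ cross a block boundary but $j \notin \mathcal{J}$, the $\eps = 0$ value $\lambda_j(A)/\lambda_{j+1}(A)$ is strictly less than $R$ and continuity preserves $\le R$. The crucial case is $j \in \mathcal{J}$, where $\lambda_j(A)/\lambda_{j+1}(A) = R$ exactly; here the first-order slope of $s_j/s_{j+1}$ in $\eps$ equals $R\bigl(|P_{E_j}\eta|^2 - |P_{E_{j+1}}\eta|^2\bigr)$ if the block containing $j$ is a singleton, and $-R\,|P_{E_{j+1}}\eta|^2$ if the block has size greater than one. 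In both situations the hypothesis \eqref{condeta} forces this slope to be strictly negative — note that \eqref{condeta} also forces $|P_{E_{j+1}}\eta| > 0$ — so $s_j/s_{j+1}$ strictly decreases from $R$ for all small $\eps$, staying below $R$.

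Combining the three cases yields $A_\eta(\eps)/s_1(A_\eta(\eps)) \in \DD_R$ for every sufficiently small $\eps > 0$, which is precisely the condition defining $\eta \in D$. The main obstacle is the bookkeeping around repeated eigenvalues of $A$: in a degenerate block only a single bifurcating singular value receives a first-order shift, and one must track whether the bumped value sits at the top or bottom index of that block when forming across-block ratios. Once this is handled, the factor $1/2$ of slack in \eqref{condeta} is more than enough to dominate the sign of the first-order change at every saturated gap in $\mathcal{J}$ and absorb the $O(\eps^2)$ corrections.
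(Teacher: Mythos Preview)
Your argument is correct and follows essentially the same route as the paper: both proofs invoke first-order eigenvalue perturbation for $A_\eta(\eps)^T A_\eta(\eps)$ (the paper via its Lemma~\ref{eigenperturb}/Corollary~\ref{eigenperturbcor}), observe that unsaturated ratios $s_j/s_{j+1}<R$ are handled by continuity, and use \eqref{condeta} to force the first-order change of the ratio at each saturated gap $j\in\mathcal{J}$ to be strictly negative. The only cosmetic difference is that the paper absorbs your ``singleton vs.\ larger block'' case split into a single inequality $\log\lambda_j(B(\eps))\le \log\lambda_j(B(0))+2\eps|P_{E_j}\eta|^2+C_2\eps^2$, which holds uniformly because the non-top indices in a block have $\lambda_j(B(\eps))$ constant.
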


Before we prove this lemma, we will need the following well-known facts regarding the dependence of eigenvalues of $A_\eta(\eps)$ on $\eps$.

\begin{lemma} \label{eigenperturb}
Let $B$ be a positive-definite $d \times d$ matrix. For all $j \in [1,d]$ write $\lambda_j = \lambda_j(B)$ and  let $E_j$ be the eigenspace corresponding to $\lambda_j$. Also, denote $E_0 = \{0\}$. Let $\eta \in \RR^d$ and define
$$
B(\eps) = (\mathrm{Id} + \eps \eta \otimes \eta)^T B (\mathrm{Id} + \eps \eta \otimes \eta) \,.
$$
Then, for every $j$ such that $E_{j-1} \neq E_j$, we have
\begin{equation} \label{eqeigenderiv}
\lambda_j ( B(\eps) ) = \lambda_j + 2 \eps \lambda_j |P_{E_j} \eta |^2 + O(\eps^2) \,,
\end{equation}
and there exists $\eps_0 > 0$ such that for every $j$ for which $E_{j-1} = E_j$, we have
$$
\lambda_j ( B(\eps) ) = \lambda_j, ~~ \forall \eps \in (0, \eps_0) \, .
$$
\end{lemma}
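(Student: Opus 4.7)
My plan is to exploit the rank-one structure of the factor $\mathrm{Id} + \eps\,\eta \otimes \eta$. Expanding yields $B(\eps) = B + \eps C + \eps^2 D$, where $C = (\eta \otimes \eta) B + B (\eta \otimes \eta)$ is symmetric of rank at most $2$ and $D = (\eta \otimes \eta) B (\eta \otimes \eta)$ has rank at most $1$. Since $B(\eps)$ is a symmetric analytic family in $\eps$, the Kato--Rellich theory (cf.\ \cite{Kato}) guarantees that, after relabelling, the eigenvalues are real-analytic functions of $\eps$ and that their first-order coefficients can be computed from $C$ via standard degenerate perturbation theory.

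The heart of the argument is the following invariant-subspace observation. For each distinct eigenvalue $\mu$ of $B$ with eigenspace $F_\mu$, the subspace $F_\mu \cap \eta^\perp$ is invariant under $B(\eps)$ and $B(\eps)$ acts on it as $\mu\,\mathrm{Id}$ for \emph{every} $\eps \in \RR$: indeed, if $u \in F_\mu \cap \eta^\perp$ then $(\eta \otimes \eta) u = 0$, so $B(\eps) u = (\mathrm{Id} + \eps\,\eta \otimes \eta) B u = \mu (\mathrm{Id} + \eps\,\eta \otimes \eta) u = \mu u$. This exhibits at least $\dim F_\mu - 1$ eigenvalues of $B(\eps)$ pinned at $\mu$. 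The second assertion follows immediately: if $E_{j-1} = E_j$, then $\lambda_j$ has multiplicity $m \geq 2$ in $B$ and fills positions $p, p+1, \ldots, p + m - 1$ of the decreasing ordering with $j > p$; the invariant subspace produces $m - 1$ eigenvalues of $B(\eps)$ pinned at $\lambda_j$. Applying Fact \ref{fact:cont} to the remaining $d - m$ eigenvalues of $B$, which lie at distance at least some $\delta > 0$ from $\lambda_j$, furnishes $\eps_0 > 0$ such that those eigenvalues of $B(\eps)$ stay outside the interval $(\lambda_j - \delta/2,\lambda_j + \delta/2)$ for all $\eps \in (0, \eps_0)$. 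By continuity and dimension counting, the $m - 1$ pinned branches must therefore occupy positions $p + 1, \ldots, p + m - 1$, giving $\lambda_j(B(\eps)) = \lambda_j$.

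For the first assertion, standard first-order perturbation theory for symmetric analytic families identifies the first-order corrections to the $m$ branches emanating from $\lambda_j$ as the eigenvalues of $P_{E_j} C P_{E_j}\big|_{E_j}$ listed in decreasing order. A direct computation using $B u = \lambda_j u$ for $u \in E_j$ and the self-adjointness of $B$ yields
\begin{equation*}
P_{E_j} C P_{E_j}\big|_{E_j} = 2 \lambda_j\, (P_{E_j} \eta) \otimes (P_{E_j} \eta),
\end{equation*}
a rank-one operator whose unique nonzero eigenvalue is $2 \lambda_j |P_{E_j} \eta|^2 \geq 0$. The main subtlety lies in the degenerate case, where one must identify which of the $m$ branches receives the maximal first-order correction; this is resolved for free by the invariant-subspace observation, which pins $m - 1$ branches at $\lambda_j$ exactly and thereby forces the unique non-pinned branch (the one carrying the positive first-order term) to occupy the top position $p = j$ of the block for small $\eps > 0$. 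This yields the expansion $\lambda_j(B(\eps)) = \lambda_j + 2\eps \lambda_j |P_{E_j} \eta|^2 + O(\eps^2)$.
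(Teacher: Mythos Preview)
Your proof is correct. It shares the paper's central observation --- that each subspace $E_j \cap \eta^\perp$ is $B(\eps)$-invariant with $B(\eps)$ acting there as $\lambda_j\,\mathrm{Id}$, which pins all but one eigenvalue in each block --- but it handles the remaining ``moving'' branch differently. The paper diagonalizes $B$, expands the characteristic polynomial as $\prod_i(\lambda_i + 2\eps\lambda_i\langle\xi,e_i\rangle^2 - \lambda) + O(\eps^2)$, and applies the implicit function theorem to extract the first-order term for simple eigenvalues, then reduces the degenerate case to the simple one via the block decomposition $B(\eps) = B'(\eps) \oplus B''(\eps)$ along $E^\perp \oplus E$. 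You instead invoke Kato--Rellich and standard degenerate perturbation theory as a black box, computing $P_{E_j} C P_{E_j}\big|_{E_j} = 2\lambda_j (P_{E_j}\eta)\otimes(P_{E_j}\eta)$ directly. Your route is shorter and avoids the explicit determinant calculation; the paper's route is more self-contained, needing nothing beyond the implicit function theorem. Both arguments ultimately identify the top branch for $\eps>0$ by noting that the single nonzero first-order correction $2\lambda_j|P_{E_j}\eta|^2$ is nonnegative, so the non-pinned branch sits at the top of its block.
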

The proof of this lemma relies on a standard eigenvalue sensitivity analysis technique. For completeness, we provide this proof in the end of this section. As an immediate corollary, we get:

\begin{corollary} \label{eigenperturbcor}
Let $j \in [2,d]$. If $E_{j-1} \neq E_{j}$, then
$$
\lambda_j (A_ \eta(\eps)^T A_\eta(\eps)) = \lambda_j(A^2) + 2 \eps \lambda_j(A^2) |P_{E_j} \eta |^2 + O(\eps^2).
$$
Otherwise, we have
$$
\lambda_j (A_ \eta(\eps)^T A_\eta(\eps)) = \lambda_j(A^2) \, .
$$
\end{corollary}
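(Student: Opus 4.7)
The plan is to simply invoke Lemma \ref{eigenperturb} with $B := A^2$. Recall that earlier in the proof we reduced to the case in which the minimizer $A$ is symmetric and positive definite, so $A^T = A$ and hence
$$
A_\eta(\eps)^T A_\eta(\eps) = (\mathrm{Id} + \eps \eta \otimes \eta)^T A^T A (\mathrm{Id} + \eps \eta \otimes \eta) = (\mathrm{Id} + \eps \eta \otimes \eta)^T A^2 (\mathrm{Id} + \eps \eta \otimes \eta).
$$
This is precisely the matrix $B(\eps)$ of Lemma \ref{eigenperturb} when $B = A^2$, so the right-hand side of the desired asymptotic is just a restatement of \eqref{eqeigenderiv} for this particular choice of $B$.

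The only small point to verify is that the eigenspaces $E_j$ and the ordering of eigenvalues match up correctly between $A$ and $A^2$. Since $A$ is symmetric positive definite, the spectral theorem gives an orthogonal decomposition $\RR^d = \bigoplus E_j$ with $A|_{E_j} = \lambda_j(A) \Id$, and consequently $A^2|_{E_j} = \lambda_j(A)^2 \Id$. Because $t \mapsto t^2$ is strictly increasing on $[0,\infty)$ and all $\lambda_j(A)$ are positive, the order of the eigenvalues is preserved, so $\lambda_j(A^2) = \lambda_j(A)^2$ and the eigenspace of $A^2$ corresponding to $\lambda_j(A^2)$ is exactly $E_j$. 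In particular, the condition $E_{j-1} \neq E_j$ is equivalent for $A$ and for $A^2$.

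Putting these observations together, Lemma \ref{eigenperturb} applied to $B = A^2$ yields, for $j$ such that $E_{j-1} \neq E_j$,
$$
\lambda_j(A_\eta(\eps)^T A_\eta(\eps)) = \lambda_j(A^2) + 2\eps \, \lambda_j(A^2) \, |P_{E_j} \eta|^2 + O(\eps^2),
$$
and for $j$ such that $E_{j-1} = E_j$, the second clause of Lemma \ref{eigenperturb} gives $\lambda_j(A_\eta(\eps)^T A_\eta(\eps)) = \lambda_j(A^2)$ for all sufficiently small $\eps > 0$. There is no serious obstacle here: the statement is essentially a notational translation of the lemma, with the only nontrivial content being the identification of the eigenspaces of $A$ with those of $A^2$, which is immediate from positive definiteness.
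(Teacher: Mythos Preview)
Your proof is correct and matches the paper's intent: the paper simply states this as an ``immediate corollary'' of Lemma~\ref{eigenperturb} without further argument, and your proposal fills in exactly the expected details (applying the lemma with $B = A^2$ and using the earlier reduction to symmetric positive definite $A$ to identify the eigenspaces).
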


We can now prove Lemma \ref{lemcondeta}.

\begin{proof}[\textbf{Proof of Lemma \ref{lemcondeta}}]
Define $B(\eps) = A_\eta(\eps)^T A_\eta(\eps)$. Remark that the first eigenvalue of $\frac{A_\eta(\eps)}{s_1(A_\eta(\eps))}$ is equal to $1$ whenever the denominator is non-zero, and by continuity, the latter is true whenever $|\eps|$ is smaller than some positive constant. We therefore deduce form the definition of the domain $\DD_R$ that in order to prove the lemma, it is enough to show that for a fixed $1 \leq j \leq d-1$, there exists $\eps_0 > 0$ such that
\begin{equation} \label{ntplem4}
\frac{s_{j} (A_\eta(\eps))^2 }{s_{j+1} (A_\eta(\eps))^2 } = \frac{\lambda_{j} (B(\eps)) }{\lambda_{j+1} (B(\eps))} \leq R^2, ~~ \forall  \eps \in (0,\eps_0) \, .
\end{equation}
If $\lambda_{j}(B(0)) < R^2 \lambda_{j+1}(B(0))$, then the above holds by the continuity of eigenvalues with respect to perturbations of the entries. Otherwise, we have $E_j \neq E_{j+1}$ and we may use Corollary \ref{eigenperturbcor} to get
$$
\log (\lambda_{j+1} (B(\eps))) \geq \log(\lambda_{j+1}(B(0))) + 2 \eps |P_{E_{j+1}} \eta |^2 - C_1 \eps^2
$$
for a constant $C_1 > 0$, provided $\eps>0$ is small enough. Another application of the same lemma gives
$$
\log(\lambda_{j} (B(\eps))) \leq \log(\lambda_{j}(B(0))) + 2 \eps |P_{E_{j}} \eta |^2 + C_2 \eps^2 <
$$
$$
\log(\lambda_{j}(B(0))) + \eps |P_{E_{j+1}} \eta |^2 + C_2 \eps^2
$$
for some constant $C_2>0$ and all sufficiently small $\eps>0$, where in the last inequality we used the condition \eqref{condeta}. A combination of these two inequalities gives
$$
\log \left ( \frac{\lambda_{j} (B(\eps))}{ \lambda_{j+1} (B(\eps)) } \right ) < \log \left ( \frac{\lambda_{j} (B(0))}{ \lambda_{j+1} (B(0)) } \right ) - \eps |P_{E_{j+1}} \eta |^2 + (C_1+C_2)\eps^2
$$
$$
= R^2 - \eps |P_{E_{j+1}} \eta |^2 + (C_1+C_2) \eps^2.
$$
By choosing $\eps_0$ small enough,
the inequality \eqref{ntplem4} is established and the lemma is proved.
\end{proof}
Next, we denote  $s_j = s_j(A)$ for all $  j \in [1, d]$, and fix an orthonormal basis $w_1,\ldots,w_d$ satisfying $A w_j = s_j w_j$
for all $  j \in [1, d]$. For a vector $\eta \in \RR^d$,  define
$$
u_j = u_j(\eta) = s_j \langle \eta, w_j \rangle, ~~ \forall j \in [1,d] \,.
$$
and $u = u(\eta) = (u_1(\eta),\ldots,u_j(\eta))$. Since $\{w_i\}$ is a basis and $s_j > 0$ for all $ j \in [1,d]$, the correspondence $u \leftrightarrow \eta$ is a bijection. By slight abuse of notation, we will thereby allow ourselves to interchange freely between $u$ and $\eta$. \\
\begin{claim} \label{claimcondu}
The vector $\eta$ satisfies the condition (\ref{condeta}) if the following condition is satisfied by the vector $u$:
\begin{equation} \label{aleph2}
\sum_{k: s_k = s_j} u_k^2 \leq \frac{R^2}{2} \sum_{ k: s_k = s_{j+1}  } u_k^2, ~~ \forall j \in \mathcal{J}.
\end{equation}
\end{claim}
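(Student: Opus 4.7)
The plan is a direct change of variables: rewrite both sides of (\ref{condeta}) in terms of the coordinates $u_k$ using the orthonormal eigenbasis $\{w_k\}$ of $A$, and check that this is exactly what (\ref{aleph2}) asserts once one uses the relation $s_j = R s_{j+1}$ for $j \in \mathcal{J}$.

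First I would fix $j \in \mathcal{J}$. Since $s_j / s_{j+1} = R$ and the non-trivial case is $R > 1$, we have $s_j > s_{j+1}$, so $E_j \neq E_{j+1}$. Because $A$ is symmetric positive definite with $A w_k = s_k w_k$, the eigenspace $E_j$ of $A$ corresponding to $\lambda_j(A) = s_j$ is precisely $\mathrm{span}\{w_k : s_k = s_j\}$, and likewise for $E_{j+1}$. Orthonormality of $\{w_k\}$ then yields
\[
|P_{E_j}\eta|^2 \;=\; \sum_{k: s_k = s_j} \langle \eta, w_k \rangle^2.
\]
The change of variables $u_k = s_k \langle \eta, w_k\rangle$ gives $\langle \eta, w_k\rangle = u_k / s_k$, and since $s_k = s_j$ throughout the sum we can pull the factor out:
\[
|P_{E_j}\eta|^2 \;=\; \frac{1}{s_j^2}\sum_{k:\, s_k = s_j} u_k^2, \qquad |P_{E_{j+1}}\eta|^2 \;=\; \frac{1}{s_{j+1}^2}\sum_{k:\, s_k = s_{j+1}} u_k^2.
\]

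To finish, I would use $s_j^2 = R^2 s_{j+1}^2$ (the defining relation of $\mathcal{J}$) to write
\[
|P_{E_j}\eta|^2 \;=\; \frac{1}{R^2 s_{j+1}^2}\sum_{k:\, s_k = s_j} u_k^2.
\]
Assuming (\ref{aleph2}), dividing both sides by $R^2 s_{j+1}^2$ then reads exactly $|P_{E_j}\eta|^2 \le \tfrac{1}{2}|P_{E_{j+1}}\eta|^2$, which gives (\ref{condeta}). There is no real obstacle: the entire argument is a one-line computation after the correct identification of $E_j, E_{j+1}$ with spans of the $w_k$'s, and the factor $R^2$ in (\ref{aleph2}) is precisely what absorbs the ratio $s_j^2/s_{j+1}^2$ that appears when passing from the normalized projections in (\ref{condeta}) to the unnormalized sums in (\ref{aleph2}).
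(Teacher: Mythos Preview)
Your argument is correct and is essentially identical to the paper's proof: both fix $j\in\mathcal{J}$, expand $|P_{E_j}\eta|^2$ and $|P_{E_{j+1}}\eta|^2$ in the eigenbasis $\{w_k\}$, convert to the $u_k$ via $u_k=s_k\langle\eta,w_k\rangle$, and use $s_j=R\,s_{j+1}$ to turn \eqref{aleph2} into \eqref{condeta}. The only cosmetic difference is that the paper interleaves the substitution $s_j^2=R^2 s_{j+1}^2$ inside the chain of equalities rather than stating the two projection formulas first and then combining.
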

\begin{proof}
Fix $j \in \mathcal{J}$. One has by definition
$$
s_{j+1} = \frac{1}{R} s_j
$$
and therefore
\begin{align*}
|| P_{E_j} \eta ||^2 ~& = \sum_{ \{k; s_k = s_j\} } \langle \eta, w_k \rangle^2
= \sum_{k: s_k = s_j} \frac{s_j^2}{R^2 s_{j+1}^2} \langle \eta, w_k \rangle^2 \\
& = \frac{1}{R^2 s_{j+1}^2} \sum_{k: s_k = s_j} u_k^2
 \stackrel{\eqref{aleph2}}{\leq} \frac{1}{2} \sum_{ k: s_k = s_{j+1}  } \frac{1}{s_{j+1}^2 } u_k^2 \\
& = \frac{1}{2} \sum_{ k: s_k = s_{j+1}  } \langle \eta, w_k \rangle^2 = \frac{1}{2} ||P_{E_{j+1}} \eta ||^2.
\end{align*}
\end{proof}
Define a function
$$
f_u(\eps) = \max_{ i \in I } \frac{\lambda_1(A_{\eta(u)} (\eps)^T M_i A_{\eta(u)} (\eps) )}{\mathrm{Tr}(A_{\eta(u)}(\eps)^T M_i A_{\eta(u)}(\eps))}.
$$
$$
~
$$
We claim that at this point, in order to prove the theorem, it is enough to prove the existence of a vector $u \in \RR^d$ satisfying:\\
($\aleph$) $\sum_{k: s_k = s_j} u_k^2 \leq \frac{R^2}{2} \sum_{k: s_k = s_{j+1}} u_k^2, ~~ \forall j \in \mathcal{J}$ and \\
($\beth$) One has $f_u(\eps) < f_u(0)$ in some non-degenerate interval $\eps \in (0, \eps_0]$. \\

To understand why this would indeed finish the proof, we recall that $A$ is defined as the minimizer of $f$ over $\DD_R$. Therefore, in order to get a contradiction, it is enough to construct a matrix $A' \in \DD_R$ such that $f(A') < f(A)$. If condition ($\aleph$) holds, then by Lemma \ref{lemcondeta} and Claim \ref{claimcondu} we know that there exists $\eps_0 > 0$ such that  $P(\eps) := \frac{A_{\eta(u)}(\eps)}{s_1(A_{\eta(u)}(\eps))} \in \DD_R$ for all $\eps \in [0, \eps_0]$. It is thus enough to show that $f(P(\eps)) < f(A)$ for some $\eps \in [0, \eps_0]$. Note, however, that for every $\alpha > 0$ one has $f(\alpha P(\eps)) = f(P(\eps))$ and therefore it is enough to show that $f(A_{\eta(u)} (\eps)) < f(A)$ for some $\eps \in [0, \eps_0]$. By the continuity of $f_i$ with respect to the topology $\mathcal{T}$, we have that $f_i(A_{\eta(u)}(\eps))$ is continuous with respect to $\eps$ at $\eps=0$ and thus there exists $\eps_1 > 0$ such that for all $i \notin I$ and for all $\eps \in [0, \eps_1]$ one has $f_i(A_{\eta(u)}(\eps)) < f(A)$. We conclude that it is enough to establish that there exists $\eps \in [0, \min(\eps_0, \eps_1)]$ such that
$\max_{i \in I} f_i(A_{\eta(u)}(\eps)) < f(A) $. But this follows immediately from the condition $(\beth)$. \\ \\

Let us sketch the idea for proving the existence of a vector satisfying the two above conditions.
We will first show that there exists a constant $c_0>0$ which depends only on the matrices $M_1,\ldots,M_\ell$ (and therefore does not depend on $R$) such that the following holds: for any given $R>0$ and any given $A \in \DD_R$ that minimizes $f(A)$, there exists a unit vector $\tilde{u} \in \RR^d$ and a number $\eps_0>0$ such that
\begin{equation} \label{needtoshow}
|u - \tilde{u}| < c_0 \Rightarrow f_u(\eps) < f_u(0), ~ \forall  \eps\in(0,\eps_0) \, .
\end{equation}
The point $\tilde{u}$ will be chosen in a manner analogous to the construction described above formula  \eqref{basicconstr}.

The second step will rely on the following fact (which we will see more clearly later): for any positive constant $c_0 > 0$ there exists a constant $R>0$ such that for any unit vector $\tilde{u} \in \RR^d$, the Euclidean ball centered at $\tilde{u}$ with radius $c_0$ contains a point $u$ which satisfies the condition ($\aleph$), regardless of the partition of the coordinates into the eigenspaces of $A$. A combination of this fact with the implication \eqref{needtoshow} will establish the existence of a vector satisfying both conditions. \\

In order to prove the implication \eqref{needtoshow}, we will need some additional perturbative estimates, contained in the following lemma:

\begin{lemma} \label{lemest}
There exist constants $C,c>0$ depending only on $M_1,\ldots,M_{\ell}$ such that the following holds: Let $R>0$ and let $A \in \DD_R$ be a positive definite matrix which minimizes $f(A)$. For any $\eta \in \RR^d$ and $u = u(\eta)$, one has \\
(i) For all $i \in I$ and $\eps > 0$,
$$
\mathrm{Tr}( A_\eta(\eps)^T M_i A_\eta(\eps) ) \geq \mathrm{Tr}(A M_i A) + c \eps |u|^2.
$$
(ii) For all $i \in I$ and $ j \in [1, k-1]$, there exists a vector $v_{i,j}$, satisfying $|v_{i,j}| < C$, such that
$$
\lambda_1(A_\eta(\eps)^T M_i A_\eta(\eps))) = \lambda_1(A M_i A) \left  (1 + \eps \sum_{m=1}^{k-1} \langle v_{i,m}, u \rangle^2 \right ) + o(\eps).
$$
\end{lemma}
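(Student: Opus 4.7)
Both parts reduce to direct computations: (i) is an elementary trace expansion, while (ii) is a first-order eigenvalue perturbation whose main subtlety is bounding $|v_{i,j}|$ uniformly in $R$. Using $A = A^T$ and writing $P := \eta\otimes\eta$, we have
$$B_i(\eps) := A_\eta(\eps)^T M_i A_\eta(\eps) = (\Id + \eps P)\, B_i\, (\Id + \eps P) = B_i + \eps C_i + \eps^2 D_i,$$
with $B_i := A M_i A$, $C_i := B_i P + P B_i$, and $D_i := P B_i P$. Taking traces gives $\mathrm{Tr}(B_i(\eps)) = \mathrm{Tr}(B_i) + (2\eps + \eps^2 |\eta|^2)\langle B_i \eta, \eta\rangle$. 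The bound $\langle B_i\eta,\eta\rangle = \langle M_i(A\eta), A\eta\rangle \geq \lambda_d(M_i)|A\eta|^2$, combined with the identity $|A\eta|^2 = \sum_k s_k^2 \langle\eta, w_k\rangle^2 = |u|^2$, immediately yields (i) with $c := 2\min_i \lambda_d(M_i) > 0$.

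For (ii), let $V_i$ be the top eigenspace of $B_i$, of dimension $m_i$, and set $\lambda_1 := \lambda_1(B_i)$. We may assume $f(A) \geq 1/k$ (otherwise the theorem is already established at $A$); for $i \in I$ this gives $\mathrm{Tr}(B_i) \leq k\lambda_1$, which combined with $B_i \succ 0$ forces $\lambda_k(B_i) < \lambda_1$, so $m_i \leq k-1$. Since $B_i|_{V_i} = \lambda_1 \Id$, a short computation shows
$$\Pi_{V_i} C_i \Pi_{V_i}\, v = 2\lambda_1\, \langle \Pi_{V_i}\eta, v\rangle\, \Pi_{V_i}\eta \qquad (v \in V_i),$$
a rank-one symmetric operator on $V_i$ whose unique nonzero eigenvalue is $2\lambda_1 |\Pi_{V_i}\eta|^2$. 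Applying standard analytic perturbation theory for symmetric matrices to the analytic family $\eps \mapsto B_i(\eps)$ therefore gives
$$\lambda_1(B_i(\eps)) = \lambda_1\bigl(1 + 2\eps\, |\Pi_{V_i}\eta|^2\bigr) + O(\eps^2).$$

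Now define $v_{i,j} := \sqrt{2}\, A^{-1} q_{i,j}$ for $1 \leq j \leq m_i$, and $v_{i,j} := 0$ for $m_i < j \leq k-1$. From $u_k = s_k\langle \eta, w_k\rangle$ and $A w_k = s_k w_k$ we have $u = A\eta$, so symmetry of $A$ yields $\langle v_{i,j}, u\rangle = \sqrt{2}\,\langle A^{-1} q_{i,j}, A\eta\rangle = \sqrt{2}\,\langle q_{i,j}, \eta\rangle$, and hence $\sum_{m=1}^{k-1}\langle v_{i,m}, u\rangle^2 = 2|\Pi_{V_i}\eta|^2$, matching the expansion above. The principal obstacle is to bound $|v_{i,j}| = \sqrt{2}\,|A^{-1} q_{i,j}|$ independently of $R$. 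From $A M_i A q_{i,j} = \lambda_1 q_{i,j}$ we deduce $A^{-1} q_{i,j} = \lambda_1^{-1} M_i A q_{i,j}$, hence
$$|A^{-1} q_{i,j}| \leq \lambda_1^{-1}\, \lambda_1(M_i)\, |A q_{i,j}| \leq \lambda_1(M_i)/\lambda_1,$$
using $\Vert A\Vert_{OP} = s_1(A) = 1$. The key observation is that the same normalization $s_1(A) = 1$ forces the matching lower bound $\lambda_1 \geq \langle B_i w_1, w_1\rangle = s_1^2 \langle M_i w_1, w_1\rangle \geq \lambda_d(M_i)$, independent of $R$. Setting $C := \sqrt{2}\,\max_i \lambda_1(M_i)/\lambda_d(M_i)$ completes the proof.
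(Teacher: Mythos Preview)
Your proof is correct and follows essentially the same route as the paper's: expand the trace for (i), apply first-order eigenvalue perturbation for (ii), and control $|v_{i,j}| = |A^{-1}q_{i,j}|$ (up to a constant) via the eigenvalue equation $A M_i A q_{i,j} = \lambda_1 q_{i,j}$ together with $\|A\|_{OP}=1$. Your lower bound $\lambda_1(B_i) \geq \langle B_i w_1, w_1\rangle \geq \lambda_d(M_i)$ is slightly cleaner than the paper's (which routes through $\lambda_1 \geq \tfrac1k\mathrm{Tr}(B_i)$ and hence uses the standing assumption $f(A)\ge 1/k$), and you also keep track of the factor $2$ that the paper silently drops; the identification ``$u = A\eta$'' is a harmless abuse since the two differ only by the orthogonal change of basis $W$.
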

\begin{proof}
For part (i) of the lemma, we simply estimate
$$
\mathrm{Tr} \bigl ( A_\eta (\eps)^T M_i A_\eta(\eps)  \bigr ) = \mathrm{Tr} \bigl ( ( \mathrm{Id} + \eps \eta \otimes \eta ) A M_i A ( \mathrm{Id} + \eps \eta \otimes \eta ) \bigr )
$$
$$
\geq \mathrm{Tr} (A M_i A) + \eps \mathrm{Tr} \bigl ( A M_i A \eta \otimes \eta + \eta \otimes \eta A M_i A \bigr ).
$$
Since for all $v \perp \eta$ one has
$$
\langle (A M_i A \eta \otimes \eta + \eta \otimes \eta A M_i A)v, v \rangle = 0
$$
we get
$$
\mathrm{Tr} \bigl ( A_\eta (\eps)^T M_i A_\eta(\eps)  \bigr ) \geq \mathrm{Tr}(A M_i A) + \frac{\eps}{ |\eta|^2 } \bigl \langle (A M_i A \eta \otimes \eta + \eta \otimes \eta A M_i A) \eta, \eta \bigr \rangle
$$
$$
= \mathrm{Tr}(A M_i M) + 2 \eps \langle A \eta, M_i A \eta \rangle = \mathrm{Tr}(A M_i A) + 2 \eps |M_i^{1/2} A \eta |^2
$$
where, in the last equality, we used the non-negativity of the matrices $M_i$. By the assumption that the matrices $M_i$ are non-degenerate, there exists a constant $c > 0$ depending on these matrices such that
$$
\mathrm{Tr} \bigl ( A_\eta (\eps)^T M_i A_\eta(\eps)  \bigr ) > \mathrm{Tr} (A M_i A) + c \eps |A \eta|^2
$$
and finally by the definition of $u$, we have
$$
|A \eta|^2 = \sum_{j=1}^d s_j^2 \langle \eta, w_j \rangle^2 = |u|^2.
$$
This finishes the proof of (i). In order to prove part (ii) of the lemma, fix $i \in I$ and let $F_i$ be the eigenspace corresponding to $\lambda_1(A M_i A)$. Remark that the assumption \eqref{Aisbad} together with the non-degeneracy of $A$ and $M_i$ imply that $\dim F_i \leq k-1$. Let $q_{i,1},\ldots,q_{i,k_i}$ be an orthonormal basis for this subspace, $k_i \leq k-1$. An application of Lemma \ref{eigenperturb} gives
$$
\lambda_1 \bigl (A_\eta(\eps)^T M_i A_\eta(\eps)) \bigr) = \lambda_1(A M_i A) \left (1 + \eps \sum_{m=1}^{k_i} \langle \eta, q_{i,m} \rangle^2 \right ) + o(\eps).
$$
By the definition of $u$, we have
$$
\langle \eta, q_{i,m} \rangle = \sum_{j=1}^d \frac{ \langle q_{i,m}, w_j \rangle}{s_j} s_j \langle \eta, w_j \rangle = \sum_{j=1}^d \frac{ \langle q_{i,m}, w_j \rangle}{s_j} u_j = \langle v_{i,m}, u \rangle
$$
where
$$
v_{i,m} := \left (\frac{ \langle q_{i,m}, w_1 \rangle}{s_1}, \ldots, \frac{ \langle q_{i,m}, w_d \rangle}{s_d} \right ).
$$
Combining the last three equations gives
$$
\lambda_1 \bigl (A_\eta(\eps)^T M_i A_\eta(\eps)) \bigr)
\lambda_1(A M_i A) \left  (1 + \eps \sum_{m=1}^{k_i} \langle v_{i,m}, u \rangle^2 \right ) + o(\eps).
$$
In order to prove the lemma, it remains to show that for all $i \in I$ and $m\in [1,k_i]$, the norm of the vector $v_{i,m}$  is bounded by a constant which only depends on $M_1,\ldots,M_d$.  To this end, fix $i \in I$, denote $\gamma = \lambda_1(A M_i A)$. We estimate for all $1 \leq m \leq k_i$,
$$
\frac{ |\langle q_{i,m}, w_j \rangle|}{s_j} = \gamma^{-1} \frac{|\langle A M_i A q_{i,m}, w_j \rangle|}{s_j} \leq
$$
$$
\gamma^{-1} ||A M_i||_{OP} \frac{ |q_{i,m}| |A w_j|}{s_j} = \gamma^{-1} \|A M_i\|_{OP}.
$$
Recall that by definition of $\DD_R$, we have  $\|A\|_{OP} = 1$. Moreover, according to the assumption \eqref{Aisbad},  since the matrix $A M_i A$ is positive definite, we have
$$
\gamma = \lambda_1(A M_i A) \geq \frac{1}{k} \mathrm{Tr}(A M_i A) \geq \frac{1}{k} \langle M_i A w_1, A w_1 \rangle
$$
$$
= \frac{1}{k} \langle M_i w_1, w_1 \rangle \geq  \frac{1}{k} \lambda_d(M_i).
$$
Consequently,
$$
\frac{ |\langle q_{i,m}, w_j \rangle|}{s_j} \leq k \frac{ \lambda_1( M_i ) }{ \lambda_d(M_i) }.
$$
The right hand side depends only on $M_i$, and therefore so does the upper bound on the norm of the vector $v_{i,m}$, as promised. The proof is complete.
\end{proof}
\bigskip
In order to use the above lemma, we will define
$$
Q(u) = \sum_{i \in I } \sum_{m=1}^{k-1} \langle u, v_{i,m} \rangle^2.
$$
Recall that one of the assumptions of the theorem is that $\ell (k-1) \leq d-1$ and that $|I| \leq \ell$. Consequently, the quadratic form $Q(u)$ is degenerate and thus there exists a unit vector $\tilde{u}$ (which is chosen to satisfy condition \eqref{needtoshow}) such that $Q(\tilde{u}) = 0$. \\ \\
Our goal at this point is to find a constant $c_0$ such that implication \eqref{needtoshow} holds. Fix $i \in I$ and define
$$
B_u(\eps) = A_{\eta(u)}(\eps)^T M_i A_{\eta(u)}(\eps).
$$
We have
\begin{equation} \label{diffratio}
\frac{\lambda_1 \bigl (B_u(\eps) \bigr ) }{\mathrm{Tr} \bigl (B_u(\eps)\bigr)} - \frac{\lambda_1(B_u(0) )}{\mathrm{Tr}(B_u(0))} =
\end{equation}
$$
\frac{\lambda_1(B_u(0))}{ \mathrm{Tr} \bigl (B_u(\eps) \bigr ) } \left( \frac{ \lambda_1 \bigl (B_u(\eps) \bigr ) - \lambda_1(B_u(0) )} {\lambda_1(B_u(0))} - \frac{ \mathrm{Tr} \bigl (B_u(\eps) \bigr) - \mathrm{Tr}(B_u(0)) }{ \mathrm{Tr}(B_u(0)) } \right  ).
$$
By making the assumption that $c_0 < 1/2$, we have
$$
|u - \tilde{u}| < c_0 \Rightarrow |u|^2 \geq \frac{1}{2}.
$$
An application of part (i) of Lemma \ref{lemest} now gives
$$
|u - \tilde{u}| < c_0 \Rightarrow \mathrm{Tr} \bigl (B_u(\eps) \bigr) - \mathrm{Tr}(B_u(0)) \geq c_1 \eps.
$$
Thanks to the fact that $||A||_{OP} \leq 1$, we have $\mathrm{Tr}(B_u(0)) < C_1$, where $C_1$ only depends on $M_i$. Consequently, we conclude that
$$
|u - \tilde{u}| < c_0 \Rightarrow \frac{ \mathrm{Tr}(B_u(\eps) \bigr) - \mathrm{Tr}(B_u(0)) }{ \mathrm{Tr}(B_u(0)) } > c_2 \eps.
$$
(again, $c_2$ is a constant depending only on $M_i$). Inspecting equation \eqref{diffratio}, we note that the term outside the brackets on the right hand side is positive (since the matrix $B(\eps)$ is positive definite). Consequently, in order to establish \eqref{needtoshow}, it is enough to prove that there exists a constant $ c_0 \in (0,1/2)$ and $\eps_1 > 0$ depending only on $M_i$ such that
$$
|u - \tilde{u}| < c_0 \Rightarrow \frac{ \lambda_1 \bigl (B_u(\eps) \bigr ) - \lambda_1(B_u(0) )} {\lambda_1(B_u(0))} \leq c_2 \eps, ~~
\forall  \eps \in (0,\eps_1) \, .
$$
An application of part (ii) of Lemma \ref{lemest} yields
$$
\frac{ \lambda_1 \bigl (B_u(\eps) \bigr ) - \lambda_1(B_u(0) )} {\lambda_1(B_u(0))} = \eps \sum_{m=1}^{k-1} \langle u, v_{i,m} \rangle^2 + o(\eps).
$$
The two above equations combined imply that it is enough to show that
$$
|u - \tilde{u}| < c_0 \Rightarrow \sum_{m=1}^{k-1} \langle u, v_{i,m} \rangle^2 < c_2
$$
and since, according to the same lemma, we have $|v_{i,m}| < C$, it follows that it is enough to require that
$$
|u - \tilde{u}|^2 < C^{-2} c_2 (k-1)^{-1}.
$$
By repeating this argument for all $i=1,\ldots,\ell$ and taking the intersection of the corresponding neighborhoods of $\tilde{u}$, we conclude that \eqref{needtoshow} holds. We may conclude the above in the following:
\begin{proposition} \label{propnts}
There exists a constant $0<c_0<1$ that depends only on the matrices $M_1,\ldots,M_\ell$ such that the following holds: for any given $R>0$ and any given $A \in \DD_R$ that minimizes $f(A)$, there exists a unit vector $\tilde{u} \in \RR^d$ and a number $\eps_0>0$ such that for every $u \in \RR^d$ satisfying $|u - \tilde{u}| < c_0$, one has
\begin{equation}
f_u(\eps) < f_u(0), ~ \forall   \eps \in (0, \eps_0) \, .
\end{equation}
\end{proposition}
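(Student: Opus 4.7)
The plan is to pick $\tilde u$ as a unit vector in the kernel of a suitable quadratic form built from the vectors $v_{i,m}$ supplied by Lemma \ref{lemest}(ii), and then transfer the strict inequality $f_u(\eps)<f_u(0)$ from $u=\tilde u$ to a ball around $\tilde u$ whose radius $c_0$ does not depend on $R$.

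First I would form the quadratic form $Q(u)=\sum_{i\in I}\sum_{m=1}^{k-1}\langle u,v_{i,m}\rangle^2$. Since $|I|\le\ell$ and the standing hypothesis $\ell(k-1)\le d-1<d$, the form $Q$ is a sum of at most $d-1$ squares of linear forms on $\RR^d$, so its kernel is nontrivial and contains some unit vector $\tilde u$.

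Next, fix $i\in I$, set $B_u(\eps)=A_{\eta(u)}(\eps)^T M_i A_{\eta(u)}(\eps)$, and start from the algebraic identity \eqref{diffratio}. In a ball $\{|u-\tilde u|<c_0\}$ with $c_0<1/2$ one has $|u|\ge 1/2$, so part (i) of Lemma \ref{lemest} delivers a uniform lower bound of the form $\mathrm{Tr}(B_u(\eps))-\mathrm{Tr}(B_u(0))\ge c_2\eps\,\mathrm{Tr}(B_u(0))$, while part (ii) yields the first-order expansion $\lambda_1(B_u(\eps))-\lambda_1(B_u(0))=\lambda_1(B_u(0))\bigl(\eps\sum_{m=1}^{k-1}\langle u,v_{i,m}\rangle^2+o(\eps)\bigr)$. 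Substituting, \eqref{diffratio} becomes a positive multiple of $-\bigl(c_2-\sum_{m}\langle u,v_{i,m}\rangle^2\bigr)\eps+o(\eps)$, which is strictly negative for small $\eps>0$ as soon as $\sum_{m=1}^{k-1}\langle u,v_{i,m}\rangle^2<c_2$.

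Finally, since $Q(\tilde u)=0$ and $|v_{i,m}|\le C$ by Lemma \ref{lemest}(ii), Cauchy--Schwarz gives $\sum_{m=1}^{k-1}\langle u,v_{i,m}\rangle^2\le (k-1)C^2|u-\tilde u|^2$, so it suffices to require $c_0^2<c_2/((k-1)C^2)$. Intersecting the resulting neighborhoods over $i\in I$ (a set of size at most $\ell$) and taking the minimum of the corresponding $\eps_0$'s then yields a single pair $(c_0,\eps_0)$ that works simultaneously for every $i\in I$. The main obstacle, which must be kept in view throughout, is that $c_0$ must be independent of $R$: this is precisely why Lemma \ref{lemest} was stated with constants depending only on $M_1,\ldots,M_\ell$, a property that ultimately rests on the bad-point bound $\lambda_1(AM_iA)\ge \lambda_d(M_i)/k$ derived from the assumption $f(A)\ge 1/k$ and the normalization $\|A\|_{OP}=1$ built into $\DD_R$.
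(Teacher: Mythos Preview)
Your proposal is correct and follows essentially the same route as the paper: choose $\tilde u$ in the kernel of the degenerate quadratic form $Q(u)=\sum_{i\in I}\sum_{m}\langle u,v_{i,m}\rangle^2$, plug the estimates of Lemma~\ref{lemest} into the identity~\eqref{diffratio}, and bound the radius $c_0$ via $|v_{i,m}|\le C$ so that it depends only on the $M_i$. The only step you compress is the passage from Lemma~\ref{lemest}(i) to the \emph{relative} trace bound $\mathrm{Tr}(B_u(\eps))-\mathrm{Tr}(B_u(0))\ge c_2\eps\,\mathrm{Tr}(B_u(0))$, which in the paper is obtained by combining the absolute increment $\ge c\eps|u|^2$ with the upper bound $\mathrm{Tr}(AM_iA)\le d\lambda_1(M_i)$ coming from $\|A\|_{OP}=1$; you do allude to this normalization at the end, so the argument is complete.
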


In order to finish the proof, we will need the following simple lemma:
\begin{lemma} \label{lemR}
For any positive constant $0 < c_0 < 1$ there exists a constant $R>0$ such that for any unit vector $\tilde{u} \in \RR^d$, the Euclidean ball centered at $\tilde{u}$ with radius $c_0$ contains a point $u$ that satisfies
$$
\sum_{1 \leq k \leq d} u_k^2 \leq \frac{R^2}{2} \min_{1 \leq k \leq d} u_k^2.
$$
\end{lemma}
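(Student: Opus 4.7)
The plan is to construct the required point $u$ explicitly as a sign-aligned perturbation of $\tilde{u}$. The only obstruction to the conclusion is that some coordinate $\tilde{u}_k$ could be very small (or zero); the goal is therefore to push every coordinate away from zero without moving $\tilde{u}$ by more than $c_0$ in Euclidean norm.

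First I would pick a sign vector $s \in \{-1,+1\}^d$ by setting $s_k = \mathrm{sgn}(\tilde{u}_k)$ whenever $\tilde{u}_k \neq 0$ and $s_k = +1$ otherwise, and then define
$$
u = \tilde{u} + \alpha s
$$
for a parameter $\alpha > 0$ to be chosen. Because $s_k$ and $\tilde{u}_k$ never have opposite signs, there is no coordinate-wise cancellation: $|u_k| = |\tilde{u}_k| + \alpha \geq \alpha$ for every $k$, yielding the crucial lower bound $\min_{k} u_k^2 \geq \alpha^2$.

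The next step is to use the triangle inequality to control the two remaining quantities. By construction $\|u - \tilde{u}\| = \alpha \sqrt{d}$ and $\|u\| \leq 1 + \alpha\sqrt{d}$. Choosing $\alpha = c_0/(2\sqrt{d})$ places $u$ strictly inside the ball of radius $c_0$ about $\tilde{u}$ and gives $\|u\|^2 \leq (1 + c_0/2)^2 < 9/4$. Combining this with the lower bound above,
$$
\frac{\sum_{k} u_k^2}{\min_{k} u_k^2} \;\leq\; \frac{9/4}{\alpha^2} \;=\; \frac{9d}{c_0^2},
$$
so the lemma holds with, for example, $R = c_0^{-1}\sqrt{18d}$.

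I do not anticipate a substantive obstacle here: the construction is explicit and the bookkeeping is elementary. The only minor subtlety is the choice of sign at indices where $\tilde{u}_k = 0$, which is resolved by picking an arbitrary fixed sign so that $|u_k| = \alpha$ there as well.
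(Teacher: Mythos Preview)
Your proof is correct and in fact cleaner than the paper's. The paper argues non-constructively: it sets $g(u)=\min_k u_k^2$, $h(v)=\max_{u\in B(v,c_0)} g(u)$, uses continuity of $h$ and compactness of the unit sphere to conclude that $\delta:=\min_{|v|=1}h(v)>0$, and then takes $R^2=8d/\delta$. Your approach replaces this compactness step with an explicit sign-aligned perturbation $u=\tilde u+\alpha s$, which directly forces $|u_k|\ge\alpha$ for every $k$. The advantage of your route is that it yields an explicit constant $R=c_0^{-1}\sqrt{18d}$ rather than an unspecified $\delta$ coming from a minimum over the sphere; it also avoids having to check that $h$ is continuous. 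The paper's argument, by contrast, is a one-line soft existence proof that would generalize with no change to any continuous coordinate functionals, but for this concrete statement your construction is both shorter and more informative.
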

\begin{proof}
For $v \in \RR^d$ we denote by $B(v)$ the closed ball of radius $c_0$ around $v$. We define two functions $g,h:\RR^d \to \RR$ by
$$
g(u) = \min_{1 \leq k \leq d} u_k^2
$$
and
$$
h(v) = \max_{u \in B(v)} g(u).
$$
Define $\delta = \min_{|v|=1} h(v)$. It is easy to verify that $h(v)$ is a continuous function and thus the minimum is attained, and moreover that $\delta > 0$. Finally, we can choose $u = \arg \max_{u \in B(\tilde u)} g(u)$. We get
$$
\sum_{1 \leq k \leq d} u_k^2 \leq 4d \leq \frac{4d}{\delta} \min_{1 \leq k \leq d} u_k^2.
$$
\end{proof}

We are finally ready to prove our theorem.

\begin{proof}[Proof of Theorem 1]
Let $c_0=c_0(M_1,\ldots,M_\ell)$ be the constant from Proposition \ref{propnts}. Let $R=R(c_0)$ be the constant corresponding to $c_0$ obtained by an application of Lemma \ref{lemR}. Let $A$ be the positive definite minimizer of $f$ in $\DD_R$. Proposition \ref{propnts} ensures the existence of a unit vector $\tilde{u}$ such that any $u$ that satisfies $|u-\tilde{u}| < c_0$, must also satisfy condition ($\beth$). Now, by Lemma \ref{lemR}, there exists a point $u$ satisfying $|u - \tilde{u}| < c_0$ which satisfies condition ($\aleph$). We have therefore found a point $u \in \RR^d$ which satisfies both conditions, and the proof is complete.
\end{proof}

\bigskip
\begin{proof}[Proof of lemma \ref{eigenperturb}]
Since the matrix $B$ is positive definite, we can write $B = U^T D U$ where $U$ is an orthogonal matrix and $D$ is diagonal. Moreover, we may clearly assume that the sequence of diagonal entries of $D$ is the sequence $\lambda=\{\lambda_1,\ldots,\lambda_d \}$ (hence, it is non-increasing). Next, note that for $v \in \RR^d$,
$$
(\eta \otimes \eta) U^T v = \eta \langle U \eta, v \rangle = U^T \Bigl((U \eta) \otimes (U \eta)\Bigr) v
$$
so
$$
(\Id + \eps \eta \otimes \eta) U^T = U^T (\Id + \eps (U \eta) \otimes (U \eta))
$$
and a similar calculation yields
$$
U (\Id + \eps \eta \otimes \eta) = (\Id + \eps (U \eta) \otimes (U \eta)) U.
$$
We get,
$$
\lambda_j \bigl( B(\eps) \bigr) = \lambda_j \bigl( (\mathrm{Id} + \eps \eta \otimes \eta)^T U^T D U (\mathrm{Id} + \eps \eta \otimes \eta) \bigr) =
$$
$$
\lambda_j \bigl( U^T (\mathrm{Id} + \eps \xi \otimes \xi)^T D (\mathrm{Id} + \eps \xi \otimes \xi) U \bigr) =
$$
$$
\lambda_j \bigl( (\mathrm{Id} + \eps \xi \otimes \xi)^T D (\mathrm{Id} + \eps \xi \otimes \xi) \bigr)
$$
where $\xi = U \eta$.
Define
$$
F(\lambda, \eps) = \det \left ((\mathrm{Id} + \eps \xi \otimes \xi)^T D (\mathrm{Id} + \eps \xi \otimes \xi) \right ).
$$
Let $\{e_i\}_{i=1}^d$ be the standard basis of $\RR^d$. We claim that
\begin{equation} \label{detpert}
F(\lambda, \eps) = \prod_{i=1}^d \left (\lambda_i + 2 \eps \lambda_i \langle \xi, e_i \rangle^2 - \lambda \right) + O(\eps^2).
\end{equation}
Indeed, we can write
$$
F(\lambda, \eps) = \det \Bigl (D + \eps (D (\xi \otimes \xi) + (\xi \otimes \xi) D) + \eps^2 \xi \otimes \xi D \xi \otimes \xi - \lambda \mathrm{Id} \Bigr).
$$
Now, when expressing the left hand side determinant as a sum of products of entries, observe that each summand except for the principal diagonal is of the order $O(\eps^2)$. Finally, observe that
$$
(D (\xi \otimes \xi))_{j,j} = ((\xi \otimes \xi) D)_{j,j} = \lambda_j \langle \xi, e_j \rangle^2
$$
and formula \eqref{detpert} follows.
We will first assume that $\lambda_i$ has multiplicity $1$. Under this assumption,  by differentiating formula \eqref{detpert}, we have
$$
\left. \frac{\partial}{\partial \lambda} F(\lambda, \eps)  \right |_{\lambda = \lambda_i, \eps=0} =
- \prod_{1 \leq j \leq d \atop j \neq i} (\lambda_j - \lambda_i) \,,
$$
and
$$
\left. \frac{\partial}{\partial \eps} F(\lambda, \eps)  \right |_{\lambda = \lambda_i, \eps=0} =
2 \lambda_i \langle \xi, e_i \rangle^2 \prod_{1 \leq j \leq d \atop j \neq i} (\lambda_j - \lambda_i) \,.
$$
The inverse function theorem then gives
$$
\lambda_i ( B(\eps) ) = \lambda_i + 2 \eps \lambda_i \langle \xi, e_i \rangle^2 + O(\eps^2).
$$
Noting that $\langle \xi, e_i \rangle = \langle U^T e_i, \eta \rangle$ and that by definition of the matrix $U$, the vector $U^T e_i$ is an eigenvector of $B$ corresponding to the eigenvalue $\lambda_i$, we conclude that $\langle \xi, e_1 \rangle^2 = |P_{E_i} \eta|^2$. This finally gives
\begin{equation} \label{eqten}
\lambda_i ( B(\eps) ) = \lambda_i + 2 \eps \lambda_i |P_{E_i} \eta|^2 + O(\eps^2),
\end{equation}
which is equation \eqref{eqeigenderiv}.

If $E_i \subset \eta^\perp$, then $E_i$ remains an eigenspace of $B(\eps)$ corresponding to the eigenvalue $\lambda_i$, for all $\eps>0$.
In this case, the lemma clearly holds.

It remains to verify the case in which the multiplicity of $\lambda_i$ is at least $2$ and $E_i$ is not a subset of $\eta^\perp$. In this case, we define
$$
E_j' = E_j \cap \eta^\perp,
$$
$$
J = \{ j; ~~ \lambda_j \mbox{ has multiplicity greater than one } \} \,,
$$
and
$$
E = \bigoplus_{j \in J} E_j'.
$$
It is straightforward to check that the subspaces $E$ and $E^\perp$ are invariant under both $B$ and $\eta \otimes \eta$, and consequently
$$
B(\eps) = B'(\eps) \oplus B''(\eps)
$$
where
$$
B'(\eps) = P_{E^\perp} B(\eps) P_{E^\perp}, ~~ B''(\eps) = P_{E} B(\eps) P_{E}.
$$
Now, by definition of the subspace $E$, we see that $B'(\eps)$ does not depend of $\eps$ and $B''(0)$ has distinct eigenvalues. Moreover, since we assume that $\lambda_i$ has multiplicity at least two, it follows that $E_i \cap \eta^\perp$ has co-dimension 1 in $E_i$. Therefore, the multiplicity of $\lambda_i$ as an eigenvalue of $B''(0)$ is exactly $1$. Let $j$ be such that $\lambda_j(B''(0)) = \lambda_i$. Then we have, by \eqref{eqten},
$$
\lambda_j ( B(\eps)'' ) = \lambda_i + 2 \eps \lambda_i |P_{F} \eta|^2 + O(\eps^2)
$$
where $F$ is the $B''(0)$-eigenspace corresponding to $\lambda_j$. Consequently, we have
$$
\lambda_{i_0} ( B(\eps) ) = \lambda_i + 2 \eps \lambda_i |P_{F} \eta|^2 + O(\eps^2).
$$
where $i_0 = \min \{ K : \lambda_k(B(0)) = \lambda_i\}$. Finally, by the definition of $E$, we have $F = E^\perp \cap E_i = \mathrm{span} \{P_{E_i} \eta\}$ which gives $P_{F} \eta = P_{E_i} \eta$. To conclude, we have that if $\lambda_i < \lambda_{i-1}$, then
$$
\lambda_{i} ( B(\eps) ) = \lambda_i + 2 \eps \lambda_i |P_{E_i} \eta|^2 + O(\eps^2) \,
$$
and otherwise
$$
\lambda_{i} ( B(\eps) ) = \lambda_i \,.
$$
The lemma is complete.
\end{proof}

\subsection{Sharpness of the bound} \label{sec:sharp}

In order to show that Theorem \ref{mainthm} is sharp, we would like to prove \eqref{eqmainsharp}. For the sake of simplicity, we will only demonstrate this in the case that $(k-1) \ell = d$.

Fix $\eps < \tfrac{1}{d}$. For $1 \leq i \leq \ell$, let $M_i$ be the matrix
$$
M_i = \mathrm{diag} \left({{  \mbox { \tiny ($k$-1)($i$-1) times }} \atop  \overbrace{\eps,\ldots,\eps} \, ,  }
 { {  \mbox{\tiny ($k$-1) times }} \atop \overbrace{1,\ldots,1} \, , }  { {  \mbox {\tiny $d$-($k$-1)$i$ times }} \atop \overbrace{\eps,\ldots,\eps} }  \right  ).
$$

Let $A$ be an arbitrary $d \times d$ matrix. Denote by $v_1,\ldots,v_n$ the rows of $A$. Since the left hand side of equation \eqref{eqmainsharp} is invariant under multiplication of $A$ by a scalar, we may assume that $\max_{1 \leq j \leq n} |v_j|^2 = 1$.

Fix $J$ so that $|v_J|=\max_j |v_j|$ and for every $1 \leq i \leq \ell$, set
$$
I(i) = \{ (k-1)(i - 1) + 1, \ldots, (k-1) i \}.
$$
We have
\begin{align*}
\mathrm{Tr}( A^T M_i A ) = &~ \sum_{j=1}^d \langle e_j, A^T M_i A e_j \rangle \\
= &~ \sum_{j=1}^d |M_i^{1/2} A e_j|^2 \\
= &~ \sum_{j_1=1}^d \sum_{j_2=1}^d \langle M_i^{1/2} A e_{j_1}, e_{j_2} \rangle^2  \\
= &~\sum_{j_1=1}^d \sum_{j_2=1}^d \langle A e_{j_1}, M_i^{1/2} e_{j_2} \rangle^2  \\
= &~ \sum_{j_1=1}^d \left ( \sum_{j_2 \in I(i)} \langle e_{j_1}, A^T e_{j_2} \rangle^2 + \eps \sum_{j_2 \in [d] \setminus I(i)} \langle e_{j_1}, A^T e_{j_2} \rangle^2 \right ) \\
= &~ \sum_{j \in I(i)} |v_{j}|^2 + \eps \sum_{j \in [d] \setminus I(i)} |v_{j}|^2 \\
\leq &~ k-1 + d \eps < k .
\end{align*}
Now, let $i_0$ be an integer such that $J \in I(i_0)$. Since $|v_J| = \max_j |v_j| = 1$, we have
$$
\lambda_1( A^T M_{i_0} A) \geq \langle A^T M_{i_0} A v_J^T, v_J^T \rangle = \langle M_{i_0} A v_J^T, A v_J^T \rangle \stackrel{J \in  I(i_0)}\geq \langle A v_J^T, e_J \rangle^2 = |v_J|^4 = 1.
$$
The last two inequalities give $\frac{\lambda_1(A^T M_{i_0} A) }{ \mathrm{Tr}( A^T M_{i_0} A ) } > \frac{1}{k}.$

\section{Self-Interacting Random walks}

The goal of this section is to establish Theorem \ref{cor1}. Our first ingredient for its proof will be the following lemma, which generalizes \cite[Lemma 2.2]{PPS}. However stated in a slightly more general form, the proof of this lemma follows similar steps to the proof which appears in \cite{PPS}. We omit it here.

\begin{lemma} \label{lemsubmartingale}
Let $k \geq 2, \beta > 0$. Let $Z$ be a mean-zero random vector in $\RR^d$ with $2 + \beta$ moments, whose respective covariance matrix $M$ satisfies
\begin{equation} \label{tracealpha}
\mathrm{Tr}(M) > k \lambda_1(M).
\end{equation}
Then there exist constants $r_0>0$ and $\alpha > k-2$ such that if $\vert x \vert \geq r_0$ then
$$
\EE[ \varphi(x + Z) - \varphi(x)] \leq 0
$$
where $\varphi(x) = \min(\vert x \vert^{-\alpha}, 1)$.
\end{lemma}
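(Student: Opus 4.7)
The plan is the standard Lyapunov function argument: compute $\EE[\varphi(x+Z)-\varphi(x)]$ via a second-order Taylor expansion around $x$ and use the mean-zero hypothesis to kill the first order term, leaving a main quadratic term whose sign is controlled by the spectral condition on $M$.

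Step 1 (main term). For $|x|\geq 1$ one has $\varphi(x)=|x|^{-\alpha}$, and a direct computation gives $\nabla^2\varphi(x)=\alpha|x|^{-\alpha-4}\bigl((\alpha+2)xx^T-|x|^2 I\bigr)$. Since $\EE[Z]=0$, a formal Taylor expansion yields
$$\EE[\varphi(x+Z)-\varphi(x)]\;\approx\;\tfrac{1}{2}\EE[Z^T\nabla^2\varphi(x)Z]\;=\;\tfrac{\alpha}{2}|x|^{-\alpha-2}\left((\alpha+2)\frac{x^T M x}{|x|^2}-\mathrm{Tr}(M)\right).$$
The bracket is at most $(\alpha+2)\lambda_1(M)-\mathrm{Tr}(M)$, which is strictly negative provided $\alpha+2<\mathrm{Tr}(M)/\lambda_1(M)$. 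Since the hypothesis reads $\mathrm{Tr}(M)/\lambda_1(M)>k$, the interval $\bigl(k-2,\,\mathrm{Tr}(M)/\lambda_1(M)-2\bigr)$ is nonempty, so any $\alpha$ in it satisfies $\alpha>k-2$ and forces the leading contribution to be bounded above by $-c_0|x|^{-\alpha-2}$ for some $c_0>0$.

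Step 2 (error control). I would split the expectation according to the events $\{|Z|\leq |x|/2\}$ and $\{|Z|>|x|/2\}$. On the small-$Z$ event, Taylor's formula with integral remainder, combined with the estimates $|\nabla^2\varphi(y)|=O(|y|^{-\alpha-2})$ and $|\nabla^3\varphi(y)|=O(|y|^{-\alpha-3})$ for $|y|\geq |x|/2$, gives a remainder of order $|x|^{-\alpha-3}|Z|^3$; interpolating $|Z|^3\leq |Z|^{2+\beta}(|x|/2)^{1-\beta}$ when $\beta\leq 1$ (and just using the finite third moment otherwise), the $(2+\beta)$-moment hypothesis bounds the resulting contribution by $O(|x|^{-\alpha-2-\beta})$. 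The linear term $\nabla\varphi(x)\cdot Z$ integrates to zero over all of $\RR^d$, so its truncation to $\{|Z|\leq |x|/2\}$ introduces only an error controlled by $|\nabla\varphi(x)|\EE[|Z|\mathbf{1}_{|Z|>|x|/2}]$, again of order $O(|x|^{-\alpha-2-\beta})$ by Markov; the same idea handles the truncation of the quadratic term. Finally, on the large-$Z$ event one uses $|\varphi(x+Z)-\varphi(x)|\leq 2$ together with $\PP(|Z|>|x|/2)=O(|x|^{-2-\beta})$ to bound that contribution by $O(|x|^{-2-\beta})$.

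Step 3 (conclusion and main obstacle). Combining the pieces,
$$\EE[\varphi(x+Z)-\varphi(x)]\;\leq\;-c_0|x|^{-\alpha-2}+O(|x|^{-\alpha-2-\beta})+O(|x|^{-2-\beta}),$$
which is negative for all $|x|\geq r_0$ with $r_0$ large enough. The genuinely delicate point is the third error term: to ensure $|x|^{-2-\beta}=o(|x|^{-\alpha-2})$ one must choose $\alpha$ close enough to $k-2$ (not merely within the spectral window) so that both inequalities $\alpha>k-2$ and $\alpha<\beta$ hold; the strictness $\mathrm{Tr}(M)>k\lambda_1(M)$ provides exactly the room needed in the spectral direction, and the choice of $\alpha$ must be coordinated simultaneously with $\beta$. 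Taking $\alpha$ slightly above $k-2$ and $r_0$ large then delivers the claimed supermartingale inequality.
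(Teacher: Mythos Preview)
Your strategy---second-order Taylor expansion with the quadratic term controlled by the spectral hypothesis, and a small-$Z$/large-$Z$ split for the remainder---is exactly the paper's approach. Your Step~1 and the small-$Z$ part of Step~2 reproduce the paper's main term and its remainder $O\bigl(|x|^{-\min(\alpha+3,\,\alpha+\beta+2)}\bigr)$ on that piece.

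The gap is in Step~3. Your large-$Z$ contribution is bounded by $|\varphi(x+Z)-\varphi(x)|\le 2$ times $\PP(|Z|>|x|/2)=O(|x|^{-2-\beta})$, and you correctly observe that absorbing this into the main term $-c_0|x|^{-\alpha-2}$ forces $\alpha<\beta$. You then write that one takes ``$\alpha$ slightly above $k-2$'' so that ``both inequalities $\alpha>k-2$ and $\alpha<\beta$ hold'', but you never check that the interval $(k-2,\beta)$ is nonempty. The lemma is stated for every $k\ge 2$ and every $\beta>0$; once $k\ge 3$ and $\beta\le k-2$ your two constraints are incompatible, and the argument as written proves the lemma only in the regime $k-2<\beta$ (in particular, unconditionally only for $k=2$). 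The strict inequality $\mathrm{Tr}(M)>k\lambda_1(M)$ gives room on the \emph{spectral} side (the upper endpoint $\mathrm{Tr}(M)/\lambda_1(M)-2$), but it says nothing about the relation between $k-2$ and $\beta$.

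By contrast, the paper's sketch asserts a total remainder of order $O\bigl(|x|^{-\min(\alpha+3,\,\alpha+\beta+2)}\bigr)$, with no separate $O(|x|^{-2-\beta})$ term; that estimate is $o(|x|^{-\alpha-2})$ for \emph{every} $\alpha$ and $\beta>0$, so no coordination between $\alpha$ and $\beta$ is needed. The sketch does not give details for the tail and simply refers to \cite{PPS}. To match the paper's claim you would have to sharpen your large-$Z$ bound from $O(|x|^{-2-\beta})$ to something of the form $O(|x|^{-\alpha-2-\beta'})$; the crude estimate $|\varphi|\le 1$ is not enough for this, and you should explain how (or under what additional hypotheses) the tail contribution can be controlled at that level.
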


\begin{proof} (sketch).
Following the same lines as \cite[Lemma 2.2]{PPS}, a Taylor expansion of the function $\varphi(\cdot)$ around the point $x$ gives that
$$
\EE[ \varphi(x + Z) - \varphi(x)] \leq \sum_{i=1}^d \frac{\alpha x_i^2 (\lambda_i (\alpha + 2) - \sum_{j=1}^d \lambda_j )}{\vert x \vert^{\alpha + 4}} + O \left (  \vert x \vert^{-\min(\alpha + 3, \alpha + \beta + 2 )} \right ).
$$
The assumption \eqref{tracealpha} gives that if $\alpha-k+2$ is small enough, then for some $c>0$ we have
$$
\sum_{i=1}^d \frac{\alpha x_i^2 (\lambda_i (\alpha + 2) - \sum_{j=1}^d \lambda_j )}{\vert x \vert^{\alpha + 4}} \leq -c \alpha \vert x \vert^{-(\alpha + 2)}.
$$
Combining the last two formulas finishes the proof.
\end{proof}

The second ingredient we need is well-known to experts. We provide the proof for completeness.
\begin{lemma} \label{lemball}
Let $\mu_1,\ldots,\mu_\ell$ be centered probability measures in $\RR^d$, with non-singular and finite covariance matrices. Then there exists a constant $C>0$ such that if $\{X_t\}$ is an adaptive random walk using $\mu_1,\ldots,\mu_\ell$ , then for every $R>C$, for all $x \in \RR^d$ and for all $\delta > 0$, we have
\begin{equation} \label{eq:nts1}
\PP \bigl(x+X_i \in B(0,R), ~~ \forall   i \in [1,  R^{2 + \delta}] \bigr) \leq C e^{-\frac{R^{\delta}}{C}}.
\end{equation}
\end{lemma}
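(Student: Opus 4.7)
The plan is a block argument. We partition the time window $[1, R^{2+\delta}]$ into $N := \lfloor R^\delta / K \rfloor$ consecutive blocks of length $T_0 := \lfloor K R^2 \rfloor$, where $K$ is a constant to be chosen depending only on the $\mu_i$. The heart of the proof is the following uniform single-block exit estimate: for every starting point $y \in B(0,R)$ and every adaptive rule, the probability that the walk remains inside $B(0,R)$ throughout a block of length $T_0$ is at most $1-p$, for some $p \in (0,1)$ depending only on the $\mu_i$. Conditioning at the start of each block on the walk's current position (which, on the survival event, lies in $B(0,R)$) and iterating via the tower property yield
$$
\PP\bigl(x + X_i \in B(0,R) \; \forall\, i \in [1, R^{2+\delta}]\bigr) \leq (1-p)^N \leq e^{-p N} \leq C e^{-R^\delta/C}
$$
for a suitable $C = C(K,p)$. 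Starting points $x \notin B(0,R)$ only make the event smaller: we apply the iteration starting from $x + X_1$ (which must lie in $B(0,R)$ for the event to hold), losing only a harmless factor.

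For the single-block estimate, fix an arbitrary unit vector $v \in \RR^d$ and consider the one-dimensional projected process $Y_t := \langle y + X_t, v\rangle$; if the walk stays in $B(0,R)$, then $|Y_t| < R$ for all $t$, so it suffices to force $|Y_{T_0}| \geq R$ with probability at least $p$. Set $c := \min_{i} \lambda_d(M_i) > 0$ (positive by non-singularity of the covariances). The martingale differences $\Delta Y_t = \langle \xi_t^{I_{t-1}}, v\rangle$ have conditional variances $\EE[(\Delta Y_t)^2 \mid \cF_{t-1}] \geq c$, and their conditional laws are drawn from the finite family $\{\langle v, \cdot \rangle_\ast \mu_i\}_{i=1}^\ell$ of square-integrable one-dimensional distributions. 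Consequently, the Lindeberg condition relative to the conditional quadratic variation $V_n^2 := \sum_{t\leq n} \EE[(\Delta Y_t)^2 \mid \cF_{t-1}] \geq c n$ holds uniformly in the adaptive rule, and the martingale CLT (Hall--Heyde) gives $(Y_n - Y_0)/V_n \Rightarrow \mathcal{N}(0,1)$ as $n \to \infty$.

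Now choose $K := \lceil 9/c \rceil$, so that $V_{T_0} \geq \sqrt{c T_0} \geq 3R$. Then for all $R$ larger than some threshold depending only on the $\mu_i$ (ensuring the CLT approximation is accurate enough), $\PP(|Y_{T_0} - Y_0| \geq V_{T_0}) \geq p$ for some $p > 0$; on this event, $|Y_{T_0}| \geq |Y_{T_0} - Y_0| - |Y_0| \geq 3R - R = 2R$, so the walk has already exited $B(0,R)$ by time $T_0$. This gives the required uniform single-block exit estimate and completes the block iteration.

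The main obstacle is precisely this anti-concentration step: under only a second-moment assumption, Paley--Zygmund reasoning (which would demand fourth moments) is not directly available. The martingale CLT sidesteps the difficulty because its convergence criterion is the Lindeberg condition, which in our setting of a finite family of fixed square-integrable distributions is automatic. A more hands-on, quantitative alternative is truncation: cut off increments at scale $M \asymp R \sqrt{K}$ (the truncation probability being $\lesssim T_0/M^2$ by Markov), recenter to preserve the martingale property, and apply Paley--Zygmund to the truncated martingale using Burkholder's $L^4$ bound to control the fourth moment. Either route delivers the required constant single-block exit probability and, via the block iteration, the desired stretched-exponential bound.
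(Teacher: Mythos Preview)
Your block iteration is exactly the paper's scheme; the difference lies in the single-block exit estimate. The paper stays in $\RR^d$ and argues via a Lyapunov function: truncating increments at level $QR$ (with $Q$ depending only on $c=\min_i\mathrm{Tr}(M_i)$ and $K=\max_i\mathrm{Tr}(M_i)$), a direct computation shows that $|\tilde X_{t\wedge\tau}|^2 - \tfrac{c}{4}(t\wedge\tau)$ is a submartingale, where $\tau$ is the exit time from $B(0,R)$. Optional stopping then gives $\EE[\tau]\le Q'R^2$, Markov gives $\PP(\tau>2Q'R^2)<\tfrac12$, and since consecutive points in $B(0,R)$ force $|Y_t|\le 2R<QR$, the truncation is vacuous on the survival event. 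This is fully quantitative and automatically uniform over adaptive rules.

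Your CLT route, by contrast, has a genuine soft spot at the uniformity step. The Hall--Heyde martingale CLT is a limit theorem for a \emph{fixed} array; saying ``the Lindeberg condition holds uniformly'' does not by itself produce a single threshold $R_0$ valid for every adaptive rule. To close this you would need a diagonal/compactness argument via the triangular-array CLT, and even that form requires $V_n^2$ (suitably normalised) to converge in probability, which an adversarial rule need not grant --- one has to pass to subsequences and land on a mixture-of-Gaussians limit with $\eta^2\ge c$ a.s. This can all be done, but it is not the one-liner you present. Your truncation\,+\,Paley--Zygmund alternative is much closer to the paper's spirit and does give a uniform quantitative bound (note that on the survival event every increment is automatically $\le 2R$, so truncation at $2R$ is free, and the recentering drift is $o(1/R)$ per step by dominated convergence, hence $o(R)$ total); had you led with that, the argument would be complete. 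The paper's $|X|^2$-submartingale computation is simply the cleanest way to get the same estimate without any asymptotics or anti-concentration machinery.
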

\begin{proof}
Define $c = \min_{1 \leq i \leq \ell} \mathrm{Tr}(M_i)$ and $K = \max_{1 \leq i \leq \ell} \mathrm{Tr}(M_i)$. For all $i \in [1,\ell]$, let $Z_i$ be a random variable with law $\mu_i$, so that $c \le \EE[ \vert Z_i \vert^2 ] \le K$ for all $i \in [1,\ell]$.
Fix a constant $Q > 0$ whose value will be determined later on. Observe that, by Cauchy-Schwartz,
\begin{equation}  
\EE \left [ \vert Z_i \vert  \mathbf{1}_{\{|Z_i| > QR \} } \right ] \le \frac{K}{QR}, ~~ \forall i \in [1, \ell] \,.
\end{equation}
Since $\EE(Z_i)=0$, it follows that 
\begin{equation} \label{eq:TrRad}
\Bigl|\EE \left [  Z_i   \mathbf{1}_{\{|Z_i| \le QR \} } \right ] \Bigr| \le \frac{K}{QR}, ~~ \forall i \in [1, \ell] \,.
\end{equation}
By dominated convergence,  there exists a constant $R_0>0$ such that
for $R>R_0$ we have
\begin{equation} \label{eq:TrRad2}
\EE \left [ \vert Z_i \vert^2 \mathbf{1}_{\{|Z_i| \le Q R \} } \right ] > c/2, ~~ \forall i \in [1, \ell] \,.
\end{equation}

Given $R$,  define $Y_t = X_{t+1}-X_t$ and
$$
\tilde Y_{t} = \mathbf{1}_{\{ |Y_t| \leq QR \} } Y_t
$$
for all $t$. The conditional means
$$
w_t = \left . \EE \left [\tilde Y_t \right | \mathcal{F}_t \right ] \,
$$
 satisfy $|w_t| \le \frac{K}{QR}$ by (\ref{eq:TrRad}) and $\tilde Y_{t}$ satisfy
\begin{equation} \label{eq:incr}
\EE \left . \left [\vert \tilde Y_t \vert^2 \right | \mathcal{F}_t \right ]  \ge c/2
\end{equation}
by (\ref{eq:TrRad2}). Next, consider the partial sums $\tilde X_t = X_0 + \sum_{j=0}^{t-1} \tilde Y_j$. On the event $|\tilde X_t| \le R$, we have
\begin{equation}  \label{eq:incr2}
\EE \left . \left [\vert \tilde X_{t+1} \vert^2 -\vert \tilde X_t \vert^2 \right | \mathcal{F}_t \right ]  =
\EE \left . \left [\vert \tilde Y_t \vert^2 \right | \mathcal{F}_t \right ] + 2\langle w_t, \tilde X_t \rangle \ge c/2-2|w_t|R \ge c/2-\frac{2K}{Q}=c/4
\end{equation}
if we pick $Q=8K/c$.

Fix a point $x \in \RR^d$ and consider the stopping time
$$
\tau = \inf \{t; ~x+ \tilde X_t \notin B(0,R) \}.
$$
By (\ref{eq:incr2}), the process $S_t = \vert \tilde X_{t\wedge \tau} \vert^2 - c ({t\wedge \tau})/4$ is a submartingale.
The optional stopping theorem gives
$$
0 \leq \EE [S_{\tau \wedge t} ] = \EE \left [\vert \tilde X_{\tau \wedge t} \vert^2 \right ] - c ( \EE[ \tau \wedge t]) / 4 \leq 
 (1+Q)^2 R^2 - c\EE (\tau \wedge t) / 4 \,,
$$
where  we used the inequality $\vert \tilde Y_s \vert \leq QR$, and that for all $s < \tau$, we have
$\vert \tilde X_s \vert \leq R$. By taking $t \to \infty$, we get
$$
\EE[\tau] \leq 4(1+Q)^2 c^{-1} R^2=Q'R^2,
$$
where $Q'= 4(1+Q)^2 c^{-1}$. By Markov's inequality
$$
\PP(\tau > 2Q' R^2) < 1/2 \,.
$$

Note that if $X_{t} \in B(0,R)$ and  $X_{t+1} \in B(0,R)$, then  $\vert Y_t \vert \le 2R \le QR$. Thus we conclude that
$$
\PP \left ( x+X_t \in B(0,R), \forall   t \in [1, 2Q' R^2] \right ) \leq 1/2.
$$
Since $x$ was arbitrary,  induction yields that for every integer $k \geq 1$,
$$
\PP \bigl(X_i \in B(0,R), ~~ \forall i \in [1,   2k Q' R^2] \bigr) \leq 2^{-k} \,.
$$
The proof is complete.
\end{proof}

\begin{proof} [Proof of Theorem \ref{cor1}]
By an application of Theorem \ref{mainthm}, there exists a matrix $A$ and a $\delta > 0$ such that
$$
\frac{\lambda_1(\tilde M_i) }{ \mathrm{Tr}( \tilde M_i ) } \leq \frac{1}{k+\delta}
$$
for all $1 \leq i \leq \ell$, where $\tilde M_i = A^T M_i A$. Define $Y_t = A X_t$ and observe that $\{Y_t\}$ is an adaptive random walk using the measures $\tilde \mu_1,\ldots, \tilde \mu_\ell$, defined as the push-forward under the matrix $A$ of the measures $\mu_1,\ldots,\mu_\ell$. Moreover, note that the covariance matrix of $\tilde \mu_i$ is $\tilde M_i$ for all $i$.

Let $\tilde R$ be the diameter of the ellipsoid $A B(0,R)$. By definition, we have that
\begin{equation} \label{BallXY}
Y_t \notin B(0,\tilde R) \Rightarrow X_t \notin B(0,R).
\end{equation}

Given $\eps \in (0,0.1)$, let $\alpha = (k - 2) + k\eps$ and $N_t = \vert Y_t \vert^{-\alpha}$.
According to Lemma \ref{lemsubmartingale}, if $\eps$ is sufficiently small, then there exists $R_1 > 0$    such that
\begin{equation} \label{supermartingale}
\vert Y_t \vert \geq R_1 \Rightarrow \EE[N_{t+1} | \mathcal{F}_t] \leq N_t \,.
\end{equation}
Next, define $R_2 = \max(\tilde R, R_1)$ and
 $r = \max \left (R_2, T^{(1 - \eps/2)/2} \right )$. Let
$$
\tau_1 = \min \{t; ~ \vert Y_t \vert \geq r \}.
$$
By Lemma \ref{lemball}, there exist constants $c,C_1>0$ which do not depend on $T$, such that
$$
\PP( \tau_1 >  r^{2 + \eps}) \leq C_1 e^{-c r^{\eps}}.
$$
By the definition of $r$  and the inequality $(1-\eps /2)(2+\eps) \leq 2$, we infer that
\begin{equation} \label{eqtau1}
\PP( \tau_1 >  \max(T, R_2^{2+\eps}) ) \leq C_1 e^{-c T^{\eps/4}}.
\end{equation}
Next, consider the stopping time
$$
\tau_2 = \inf \{t \geq \tau_1; ~ \vert Y_t \vert \leq R_2  \}
$$
with the conventions that $\inf \emptyset = \infty$ and $N_\infty=0$. By \eqref{supermartingale}, we see that   $\{N_{(t+\tau_1) \wedge \tau_2}\}$ is a supermartingale. Thus, by the optional stopping theorem and Fatou's lemma, we have that
$$
\PP(\tau_2 < \infty) \leq \EE[R_2^{\alpha} N_{\tau_2}]  \le \EE[R_2^{\alpha} N_{\tau_1}] \leq (R_2/r)^{\alpha} \leq C T^{-(k-2+k\eps)  (1-\eps/2)/2} \leq C T^{\frac{-(k-2)}{2} - \delta}
$$
for some $C,\delta > 0$ which do not depend on $T$ (in the last inequality we use the assumption that $\eps < 0.1$). Combining this with \eqref{eqtau1} and using a union bound, gives
$$
\PP \left (\exists t > \max(T, R_2^{2+\eps}) \mbox{ such that } Y_t \in B(0, \tilde R) \right ) \leq \PP \left ( \tau_1 >  \max(T, R_2^{2+\eps}) \right ) + \PP(\tau_2 < \infty)
$$
$$
 \leq C_1 e^{-c T^{\eps/4}} + C T^{\frac{-(k-2)}{2} - \delta}.
$$
Since $C_1, C,\eps,\delta$ and $R_2$ do not depend on $T$, the above implies that there exists a constant $C_2>0$ which does not depend on $T$ such that
$$
\PP \left (\exists t>T \mbox{ such that } Y_t \in B(0, \tilde R) \right ) < C_2  T^{\frac{-(k-2)}{2} - \delta}.
$$
In view of equation \eqref{BallXY}, this gives
$$
\PP \left (\exists t>T \mbox{ such that } X_t \in B(0, R) \right ) < C_2  T^{\frac{-(k-2)}{2} - \delta}
$$
and the proof is complete.
\end{proof}

\section*{Acknowledgements}

This work was done when R.E. was a visiting researcher at the Theory Group of Microsoft Research. He wishes to thank the Theory Group for their kind hospitality.

\end{document}